\newtheorem{theorem}{Theorem}[section]
{\theorembodyfont{\rm} \newtheorem{definition}[theorem]{Definition} }
{\theorembodyfont{\rm} \newtheorem{example}[theorem]{Example} }
{\theorembodyfont{\rm} \newtheorem{remark}[theorem]{Remark} }
\newtheorem{lemma}[theorem]{Lemma}
\newtheorem{corollary}[theorem]{Corollary}
\newtheorem{proposition}[theorem]{Proposition}
\title{Converse results, saturation and \quasioptimality for Lavrentiev regularization
of accretive problems}
\numberwithin{equation}{section}
\author{Robert Plato%
\thanks{Department of Mathematics, University of Siegen,
Walter-Flex-Str.~3, 57068 Siegen, Germany.
%%%}\ , \ Alpha-Version
}
}
\newcommand{\para}{\gamma}
\newcommand{\parab}{\overline{\para}}
\newcommand{\parac}{\para(\delta,\fdelta)}
\newcommand{\paradelta}{\gamma_\delta}
\newcommand{\parazero}{\gamma_0}
\newcommand{\ix}{\mathcal{X}}
\newcommand{\ixh}{\mathcal{H}}
\newcommand{\R}{\mathcal{R}}
\newcommand{\D}{\mathcal{D}}
\newcommand{\N}{\mathcal{N}}
\newcommand{\un}{u_{N}}
\newcommand{\ur}{u_{R}}
\newcommand{\reza}{\mathbb{R}}
\newcommand{\pp}{p}
\newcommand{\qq}{q}
\newcommand{\postype}{nonnegative\xspace}
\newcommand{\ofpostype}{\postype{}\xspace}
\newcommand{\accretive}{accretive\xspace}
\newcommand{\lavmet}{Lavrentiev regularization\xspace}
\newcommand{\lavsmet}{\lavmet{}\xspace}
\newcommand{\upara}[1][\para]{u_{#1}}
\newcommand{\uparab}{\upara[\parab]}
\newcommand{\uparan}{\upara[\para_n]}
\newcommand{\eparaur}[1][\para]{\fparau[#1]{\ur}}
\newcommand{\eparaux}{\upara-u}
\newcommand{\eparauxb}{\uparab-u}
\newcommand{\eparauxn}{\uparan-u}
\newcommand{\eparauy}{\upara \to u}
\newcommand{\fpara}[1][\para]{e_{#1}}
\newcommand{\fparab}{\fpara[\parab]}
\newcommand{\fparau}[2][\para]{e_{#1}(#2)}
\newcommand{\uparadel}[1][\para]{u_{#1}^\delta}
\newcommand{\uparadelc}{\uparadel[\parac]}
\newcommand{\util}{\widetilde{u}}
\newcommand{\fdelta}{f^\delta}
\newcommand{\Ddelta}{\Delta}
\newcommand{\Pdelta}[1][\delta]{P^{#1}}
\newcommand{\Pdeltau}[1][\delta]{P^{#1}(u)}
\newcommand{\Pdeltautil}[1][\delta]{P^{#1}(\util)}
\newcommand{\Rdelta}[2][\delta]{\myR^{#1}_{#2}}
\newcommand{\Rdeltau}[2][\delta]{\myR^{#1}_{#2}(u)}
\newcommand{\Rdeltautil}[2][\delta]{\myR^{#1}_{#2}(\util)}
\newcommand{\Qdelta}[1][\delta]{Q^{#1}}
\newcommand{\Qdeltau}[1][\delta]{Q^{#1}(u)}
\newcommand{\uparafdelb}[1][\fdelta]{u_{\para(\delta,\fdelta)}(#1)}
\newcommand{\Landau}{\mathcal{O}}
\newcommand{\landau}{\mbox{\tiny $ \mathcal{O} $}}
\newcommand{\refeq}[1]{(\ref{eq:#1})}
\newcommand{\nontrivial}{nontrivial\xspace}
\newcommand{\vpara}{v_{\varepsilon}}
\newcommand{\veps}{v_{\varepsilon}}
\newcommand{\phibeta}{\varphi_{\beta}}
\newcommand{\betapara}{\beta(\varepsilon)}
\newcommand{\betaparab}{\beta(\varepsilon)}
\newcommand{\phibetapara}{\varphi_{\betapara}}
\newcommand{\ceps}{c_\varepsilon}
\newcommand{\bn}{\bigskip \noindent}
\newcommand{\rr}{\sigma}
\newcommand{\alp}{\alpha}
\newcommand{\myR}{R}
\newcommand{\deltapara}{\delta}
\newcommand{\orderoptimal}{strongly \quasioptimal{}\xspace}
\newcommand{\orderoptimality}{strong \quasioptimality{}\xspace}
\newcommand{\quasioptimal}{quasi-optimal\xspace}
\newcommand{\Quasioptimal}{Quasi-optimal\xspace}
\newcommand{\quasioptimality}{\quasioptimal{}ity\xspace}
\newcommand{\Quasioptimality}{\Quasioptimal{}ity\xspace}
\newcommand{\pseudooptimal}{weakly \quasioptimal{}\xspace}
\newcommand{\pseudooptimality}{weak \quasioptimality{}\xspace}
\newcommand{\re}{ \textup{Re} \, }
\newcommand{\skp}[2]{\langle #1,  #2 \rangle}
\newcommand{\norm}[1]{\Vert \hspace{0.4mm} #1 \hspace{0.4mm} \Vert}
\newcommand{\normqua}[1]{\norm{#1}^2}
\theoremstyle{nonumberplain}
\newtheorem{proof}{\normalfont \textsc{Proof}}
\def\endproof{\quad\vbox{\hrule height0.4pt\hbox{%
   \vrule height0.75ex width0.5pt\hskip0.8ex
   \vrule width0.5pt}\hrule height0.4pt
   }}
\newenvironment{myenumerate}{%
\begin{list}{(\alph{enumcount})}
{\setcounter{enumcount}{1}\usecounter{enumcount}
\setlength{\topsep}{1mm}
\setlength{\itemsep}{0mm}
\setlength{\listparindent}{4mm}
\setlength{\parsep}{0mm}
\setlength{\labelwidth}{0mm}
\setlength{\labelsep}{3mm}
\setlength{\itemindent}{3mm}
\setlength{\leftmargin}{0mm}
}}{\end{list}}
\newenvironment{myenumerate_indent}{%
\begin{list}{(\alph{enumcount})}
{\setcounter{enumcount}{1}\usecounter{enumcount}
\setlength{\topsep}{1mm}
\setlength{\itemsep}{-0.5mm}
\setlength{\labelwidth}{3mm}
\setlength{\labelsep}{2mm}
\setlength{\itemindent}{-0.5mm}
\setlength{\leftmargin}{8mm}
}}{\end{list}}
\newenvironment{mylist}{%
\begin{list}{\tinybullet}
{\setlength{\topsep}{0.2cm}
\setlength{\itemsep}{0mm}
\setlength{\labelwidth}{0mm}
\setlength{\labelsep}{3mm}
\setlength{\itemindent}{3mm}
\setlength{\leftmargin}{0mm}
}}{\end{list}}
\newcommand{\tinybullet}{{\tiny \raisebox{0.6mm}{$ \bullet $}}}
\newenvironment{mylist_indent}{%
\begin{list}{\tinybullet}
{\setlength{\topsep}{0.2cm}
\setlength{\itemsep}{-0.5mm}
\setlength{\labelwidth}{2mm}
\setlength{\labelsep}{3mm}
\setlength{\itemindent}{0mm}
\setlength{\leftmargin}{5mm}
}}{\end{list}}
\newcommand{\Deltapara}[1][\para]{\Delta_{#1}^\delta}
\newcommand{\axrassump}{Let $ A: \ix \to \ix $ be a \postype bounded linear operator on a reflexive Banach space $ \ix $}
\newcommand{\axruassump}{Let $ A: \ix \to \ix $ be a \postype bounded linear operator on a reflexive Banach space $ \ix $, and let $ u \in \ix $}
\newcommand{\axassump}{Let $ A: \ix \to \ix $ be a \postype bounded linear operator on a Banach space $ \ix $}
\newcommand{\axuassump}{Let $ A: \ix \to \ix $ be a \postype bounded linear operator on a Banach space $ \ix $, and let $ u \in \ix $}
\newcommand{\ahassump}{Let $ A: \ixh \to \ixh $ be an \accretive bounded linear operator
 on a Hilbert space $ \ixh $}
\newcommand{\ahuassump}{Let $ A: \ixh \to \ixh $ be an \accretive bounded linear operator
 on a Hilbert space $ \ixh $, and let $ u \in \ixh $}
\newcommand{\ahillpassump}{Let $ A: \ixh \to \ixh $ be an \accretive bounded linear operator on a Hilbert space $ \ixh $, with $ \mathcal{R}(A) \not = \ixh $}
\newcommand{\ahuillpassumpzer}{Let $ A: \ixh \to \ixh $ be an \accretive bounded linear operator on a Hilbert space $ \ixh $ with $ \mathcal{R}(A) \not = \ixh $, and let $ 0 \neq u \in \ixh $}
\newcommand{\remarkend}{\quad $ \vartriangle $}
\newcommand{\forexdat}{in case of exact data\xspace}
\newcommand{\fornoisdat}{in case of noisy data\xspace}
\newcommand{\posboundlinear}{\postype bounded linear\xspace}
\begin{document}

\date{}
\maketitle
\newcounter{enumcount}
\renewcommand{\theenumcount}{(\alph{enumcount})}

\bibliographystyle{plain}

\begin{abstract}
This paper deals with \lavsmet for solving linear ill-posed problems, mostly
with respect to \accretive operators on Hilbert spaces. 
We present converse and saturation results which are an important part in regularization theory. As a byproduct we obtain a new result on the \quasioptimality of a posteriori parameter choices.
Results in this paper are formulated in Banach spaces whenever possible.
%% for \lavmet.
\end{abstract}
\section{Introduction}
\label{intro}
Converse and saturation results are an important part in regularization theory for solving ill-posed problems. Related results for Tikhonov regularization were developed many years ago and are well known, see Groetsch~\cite[Chapter 3]{Groetsch[84]} and the references therein, or Neubauer~\cite{Neubauer[97]}.
In the present paper we show that similar results can be obtained for \lavsmet 
when \accretive linear bounded, and possibly non-selfadjoint, operators on Hilbert spaces are involved. 
%%We follow the lines of \cite{Neubauer[97]}, and  \cite{Raus[84]},
Our work is inspired by the two papers \cite{Neubauer[97],Raus[84]}. At 
several steps, however, the technique used
in the present paper
differs substantially from the one used in the two papers \cite{Neubauer[97],Raus[84]}
since no
%%%resolution of the identity 
spectral decomposition is available in our setting, in general.
As a byproduct we obtain a new result on the optimality of
a posteriori parameter choices for \lavsmet.

We start more generally with the consideration of equations on Banach spaces, i.e.,
\begin{align}
A u = f,
\label{eq:maineq}
\end{align}
where $ A: \ix \to \ix $ is a bounded linear operator
on a real or complex Banach space $ \ix $ with norm $ \norm{\cdot} $, and $ f \in \mathcal{R}(A) $.
Our focus is on operators having a non-closed range $ \mathcal{R}(A) $ which in fact 
implies that the considered equation \refeq{maineq} is ill-posed.
Note, however, that this range condition will be explicitly stated in this paper whenever needed.
%%%It is assumed throughout this paper 
%%%(if not explicitly stated otherwise)
%%%that the range $ \mathcal{R}(A) $ is non-closed, if not specified otherwise.
In the sequel we restrict the considerations to the following class of
operators:
\begin{definition}
\label{th:postype-def}
A bounded linear operator $ A: \ix \to \ix $ on a Banach space $ \ix $ is called \emph{\ofpostype}, if for any parameter $ \para > 0 $ 
the operator $ A + \para I: \ix \to \ix $ has a bounded inverse on $ \ix $, and
\begin{align}
\norm{(A + \para I)^{-1} } \le \tfrac{ M }{\para}
\quad \text{for } \para > 0,
\label{eq:postype}
\end{align}
holds, with some constant $ M \ge 1 $ that is independent of $ \para $.
\end{definition}
The notation ``\postype'' is introduced by Komatsu~\cite{Komatsu[69.1]}; see also Martinez/Sanz~\cite{Martinez_Sanz[00]}. In many papers, no special notation is used for property \refeq{postype}.
\begin{example}
\label{th:integration_abel_example}
Prominent examples of \postype operators are given by the classical integration operator 
$ (Vu)(x) =\int_{0}^{x} u(y) dy $ 
for $ 0 \le x \le 1 $,
and the Abel integral operators
$ (V^\alp u)(x)
= \frac{1}{\Gamma(\alp)}
\int_{0}^{x}{(x-y)^{-(1-\alp)} u(y) }{dy}
$ 
for $ 0 \le x \le 1 \ (0 < \alp < 1) $. Both operators $ V $ and $ V^\alp $ are considered either on the space
of functions $ \ix = L^p(0,1) $ with $ 1 \le p \le \infty $, or the space of continuous functions $ \ix = C[0,1] $. See, e.g. \cite[Section 1.3]{Plato[95]} for details.
\remarkend
\end{example}
For the regularization of the considered equation $ Au = f $ with 
a \posboundlinear operator $ A $, we consider Lavrentiev's method
\begin{align}
(A + \para I) \uparadel = \fdelta,
\label{eq:lavmet}
\end{align}
where $ \para > 0 $ is a regularization parameter. In addition we have
\begin{align}
\fdelta \in \ix, \quad
\norm{ f - \fdelta } \le \delta,
\label{eq:noisy_data}
\end{align}
where $ \delta > 0 $ is a given noise level.
We next consider fractional powers of the operator $ A $ that may serve as a tool to describe smoothness of solutions for equation \refeq{maineq}.
%
%
%
%%A typical smoothness assumption in regularization theory ist that
%%In this paper we are concerned with
%%%the choice of the regularization parameter $ \para $ under the assumption that 
%%belongs to the range u \in \mathcal{R}(A^\pp),
%
%%%\begin{align}
%%%\label{eq:adjoint_source_condition}
%%%\end{align}
%
%%where $ A^*: \ix \to \ix $ denotes the Hilbert space adjoint of the operator $ A $, and
%%%the smoothness parameter $ 0 < \pp \le  1 $ is assumed to be known. 
%
%%%\subsection{Fractional operators of \postype operators}
%
\begin{definition}
\label{th:frac-power}
For $ 0 < \pp < 1 $, the \emph{fractional power} $ A^\pp: \ix \to \ix $ of a \postype bounded linear operator $ A: \ix \to \ix $ on a Banach space $ \ix $ is given by
(see, e.g., Kato~\cite[formula (12)]{Kato[62]})
the improper operator-valued integral
\begin{align}
A^\pp := \frac{\sin \pi \pp }{\pi}
\int_0^\infty s^{\pp-1} (A + sI)^{-1} A \  ds,
\quad 0 < \pp < 1 .
\label{eq:frac-power}
\end{align}
For arbitrary values $ \pp > 0 $, the fractional power
$ A^\pp $ of the operator $ A $ is defined by
$
A^\pp := A^{\pp - \lfloor \pp \rfloor} A^{\lfloor \pp \rfloor},
$
where $ \lfloor \pp \rfloor $ denotes the largest integer which does not exceed $p$.
\remarkend
\end{definition}
%
%
%%See, e.g.,
%%%Krasnosel'ski\u{\i} et al.~\cite{Krasnoselski_etal[76]}.
For each $ 0 < \pp < 1 $, the identity \refeq{frac-power} defines a bounded linear operator
$ A^\pp: \ix \to \ix $. 
In inverse problems, smoothness of a solution $ u $ 
of equation \refeq{maineq}
is often described in the form $ u \in \R(A^\pp)$
for some $ \pp > 0 $. This allows to deduce convergence rates
for \lavmet 
in the case of noise-free data
(with respect to $ \para $)
as well as in the case of noisy data 
(in terms of $ \delta $ then).

The subject of this paper is to present converse and saturation results for those convergence rates. In other terms,
the impact of the convergence rates on the smoothness of the solution is considered, and the maximal possible rates are identified.
The outline of the paper is as follows. 
Section \ref{conv_saturation_exdat} deals with
converse and saturation results for \lavmet \forexdat. 
In Section \ref{optimality}, a theorem is presented which provides the basis for 
the converse and saturation results \fornoisdat.
This theorem has also impact on the optimality of parameter choices for \lavmet, and related results are stated in Section \ref{optimality} as a byproduct.
In Sections \ref{conv_noisdat} and \ref{saturation_noisdat},
converse and saturation results \fornoisdat are presented,
and
Section \ref{auxiliary} serves as an appendix which provides some auxiliary results.
The main results of this paper are formulated in Theorems
\ref{th:pseuopt=orderopt},
\ref{th:converse_noisy_data_1} and
\ref{th:saturation_noisy_data}.
%
%%in Sections \ref{conv_noisdat} and \ref{saturation_noisdat},
%%and as byproduct in Section \ref{optimality} 
%% we derive some results on the \quasioptimality of parameter choices for \lavmet.
%%The latter results make use of a fundamental result from Section \ref{optimality},
%
\section{Converse and saturation results \forexdat}
\label{conv_saturation_exdat}
%%%\section{Converse and saturation results for the bias}
\subsection{Introductory remarks}
Throughout this section let $ A: \ix \to \ix $ be a \postype bounded linear operator on a Banach space $ \ix $. Our main interest are operators with a non-closed range $ \mathcal{R}(A) $, but 
nowhere in this section this is 
explicitly required. 
Comments on the closed range case can be found at the end of this section, cf.~Remark \ref{th:closed_range_exact_data}.

In a first step, and also as preparation for
converse and saturation results related with noisy data presented in the following sections, we consider \lavsmet in case of exact data, and we also introduce the corresponding approximation error:
for any $ u \in \ix $ let $ \upara \in \ix $ and $ \fpara \in \ix $ be given by
\begin{align}
(A+\para I)\upara = f, \qquad
\fparau{u} := \fpara := \upara-u, \quad \para > 0.
\label{eq:upara-epara-def}
\end{align}
We note that the approximation error
$ \fpara $, sometimes also called bias, can be represented as follows:
\begin{align}
\fpara = -\para (A + \para I)^{-1}u \quad \textup{ for } \para > 0.
\label{eq:epara-rep}
\end{align}
The smoothness of a solution $ u $, given in the form $ u \in \R(A^\pp)$ for some $ \pp > 0 $, has impact on the speed of 
convergence $ \upara \to u $ as $ \para \to 0 $.
%%%in \lavmet.
We cite the following well-known result; for a proof, see, e.g., 
\cite[Example 4.1]{Plato[96]}.
\begin{proposition}
\label{th:direct_exact_data}
\axassump.
If $ u \in \overline{\R(A)} $ then $ \eparauy $ as $ \para \to 0 $.
If moreover $ u \in \R(A^\pp) $ for some $ 0 < \pp \le 1 $
then
$ \norm{\eparaux} = \Landau(\para^\pp) $ as $ \para \to 0 $.
\end{proposition}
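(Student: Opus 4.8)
The plan is to work directly from the bias formula \refeq{epara-rep}, i.e.\ $\fpara = -\para(A+\para I)^{-1}u$, and to estimate the operators $\para(A+\para I)^{-1}$ and their compositions with fractional powers of $A$ using nothing beyond the resolvent bound \refeq{postype}. Since no spectral calculus is available, each estimate has to be extracted ``by hand'' from \refeq{postype} and, for the fractional case, from the integral representation \refeq{frac-power}.

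For the convergence statement I would first note that $\{\para(A+\para I)^{-1}:\para>0\}$ is uniformly bounded (by $M$, from \refeq{postype}). On the dense subspace $\R(A)\subseteq\overline{\R(A)}$ convergence is immediate: for $u=Aw$ the identity $A(A+\para I)^{-1}=I-\para(A+\para I)^{-1}$ gives
\[
\fpara=-\para(A+\para I)^{-1}Aw=-\para w+\para^{2}(A+\para I)^{-1}w ,
\]
whose norm is at most $(1+M)\para\norm{w}\to 0$. A routine uniform-boundedness ($3\varepsilon$) argument then extends $\fpara\to 0$ to every $u\in\overline{\R(A)}$, which is the assertion $\eparauy$.

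For the rate, the case $\pp=1$ is already contained in the display above: if $u=Aw$ then $\norm{\eparaux}=\norm{\fpara}\le(1+M)\para\norm{w}=\Landau(\para)$. For $0<\pp<1$, write $u=A^{\pp}w$. Since every resolvent $(A+sI)^{-1}$ — and hence $A^{\pp}$, defined by \refeq{frac-power} — commutes with the bounded operator $\para(A+\para I)^{-1}$, inserting \refeq{frac-power} into $\fpara=-\para(A+\para I)^{-1}A^{\pp}w$ and moving the bounded factor under the (norm-convergent) integral gives
\[
\fpara=-\frac{\sin\pi\pp}{\pi}\int_{0}^{\infty}s^{\pp-1}\,\para(A+\para I)^{-1}(A+sI)^{-1}Aw\,ds .
\]
I would then bound the integrand operator in two complementary ways: grouped as $[\para(A+\para I)^{-1}]\,[(A+sI)^{-1}A]$ with $\norm{(A+sI)^{-1}A}=\norm{I-s(A+sI)^{-1}}\le 1+M$ it is at most $M(1+M)$; grouped as $[\para A(A+\para I)^{-1}]\,[(A+sI)^{-1}]$ with $\norm{\para A(A+\para I)^{-1}}\le(1+M)\para$ and $\norm{(A+sI)^{-1}}\le M/s$ it is at most $(1+M)M\para/s$. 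Hence the integrand norm is $\le M(1+M)\,s^{\pp-1}\min\{1,\para/s\}\norm{w}$, and the scalar integral $\int_{0}^{\infty}s^{\pp-1}\min\{1,\para/s\}\,ds=\para^{\pp}/(\pp(1-\pp))$ (split at $s=\para$) yields $\norm{\eparaux}=\norm{\fpara}\le C(M,\pp)\,\para^{\pp}\norm{w}=\Landau(\para^{\pp})$.

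The step I expect to be the main obstacle is precisely this last one — controlling $\para(A+\para I)^{-1}A^{\pp}$ through the singular integral \refeq{frac-power} in the absence of a functional calculus. The device that makes it go through is the pair of complementary bounds for the integrand, which collapses the operator estimate to the elementary convergent scalar integral above. An alternative route, if one is willing to quote more, is the moment inequality $\norm{A^{\pp}x}\le c\,\norm{Ax}^{\pp}\norm{x}^{1-\pp}$ for nonnegative operators, applied to $x=\para(A+\para I)^{-1}w$, together with $\norm{x}\le M\norm{w}$ and $\norm{Ax}\le(1+M)\para\norm{w}$.
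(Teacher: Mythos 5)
Your argument is correct. Note that the paper itself does not prove this proposition; it only cites \cite[Example 4.1]{Plato[96]}, so there is no internal proof to compare against, and your write-up in effect supplies the standard self-contained argument. Both halves check out: the identity $\fpara=-\para(A+\para I)^{-1}u$ together with $A(A+\para I)^{-1}=I-\para(A+\para I)^{-1}$ gives the $\Landau(\para)$ bound on $\R(A)$, and the uniform bound $\norm{\para(A+\para I)^{-1}}\le M$ from \refeq{postype} lets the usual density argument extend convergence to all of $\overline{\R(A)}$. For $0<\pp<1$ your treatment of the Balakrishnan integral is the classical device: the integrand is $\Landau(s^{\pp-1})$ near $0$ and $\Landau(s^{\pp-2})$ at infinity, so \refeq{frac-power} converges absolutely and the bounded, commuting factor $\para(A+\para I)^{-1}$ may be pulled inside; the two complementary groupings give the bound $M(1+M)\,s^{\pp-1}\min\{1,\para/s\}\norm{w}$, and splitting the scalar integral at $s=\para$ yields $\para^{\pp}/(\pp(1-\pp))$, hence $\norm{\fpara}\le\frac{\sin\pi\pp}{\pi}\,\frac{M(1+M)}{\pp(1-\pp)}\,\para^{\pp}\norm{w}$. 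The constant correctly blows up as $\pp\to 0$ or $\pp\to 1$, but the endpoint $\pp=1$ is covered by your separate direct computation, so the statement as claimed is fully proved. Your alternative via the moment inequality $\norm{A^{\pp}x}\le c\,\norm{Ax}^{\pp}\norm{x}^{1-\pp}$ applied to $x=\para(A+\para I)^{-1}w$ is also valid for nonnegative operators (it is part of Komatsu's theory cited in the paper), though it quotes more than the elementary route you carried out.
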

For recent results on the convergence of \lavmet with adjoint source conditions $ u \in \R((A^*)^\pp)$ in Hilbert spaces, see, e.g., Hofmann/Kaltenbacher/Resmerita~\cite{Hofmann_Kaltenbacher_Resmerita[16]} and
Plato/Hofmann/Math\'{e}~\cite{Plato_Hofmann_Mathe[16]}.

Two natural questions arise in the context of Proposition \ref{th:direct_exact_data}:
\begin{mylist_indent}
\item
Are the given convergence results in that
proposition
%%%Proposition \ref{th:direct_exact_data}
optimal, or, in other terms, are the conditions
$ u \in \overline{\R(A)} $ and
$ u \in \R(A^\pp) $ considered there also necessary, respectively?

\item
Is the range of values for $ \pp $, 
considered in Proposition \ref{th:direct_exact_data},
maximal?
\end{mylist_indent}
We show in this section that the answers to those questions are basically affirmative.
%%%while the answer to the latter question is  negative. 
The related results are called converse and saturation results for \lavmet in case of exact data, respectively.

\subsection{Converse results \forexdat}
We start with three converse results related with exact data.
\begin{theorem}
\label{th:converse_exact_data_1}
\axruassump.
If $ \eparauy $ as $ \para \to 0 $,
then necessarily $ u \in \overline{\R(A)} $ holds.
\end{theorem}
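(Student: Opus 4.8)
The plan is to show that, in the exact-data situation, the Lavrentiev approximant $\upara$ always lies in the range $\R(A)$; since $u$ is by hypothesis the norm limit of the $\upara$'s as $\para\to 0$, it is then forced into the closed subspace $\overline{\R(A)}$.

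For the first step, recall that with exact data we have $f = Au$, so that $\upara = (A+\para I)^{-1}f = (A+\para I)^{-1}Au$ by \refeq{upara-epara-def}. Since $A$ evidently commutes with $A+\para I$, it also commutes with $(A+\para I)^{-1}$, and hence
\[
\upara \;=\; A\,(A+\para I)^{-1}u \;\in\; \R(A) \qquad (\para>0).
\]
(Equivalently, this can be read off from \refeq{epara-rep} via $\upara = u + \fpara$ together with $u = A(A+\para I)^{-1}u + \para(A+\para I)^{-1}u$.) Thus $\upara \in \R(A) \subseteq \overline{\R(A)}$ for every $\para > 0$, and, $\overline{\R(A)}$ being a closed subspace of $\ix$, the assumed convergence $\eparauy$ as $\para \to 0$ gives $u \in \overline{\R(A)}$, as claimed.

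The only point requiring attention is the first step — the identity $\upara = A(A+\para I)^{-1}u$ obtained from commutativity; after that one uses nothing but the closedness of $\overline{\R(A)}$, so that the reflexivity of $\ix$ is in fact not needed here and is presumably imposed only for uniformity with the other converse results of this section. If one prefers to avoid invoking closedness, the same conclusion follows by duality: were $u \notin \overline{\R(A)}$, the Hahn--Banach theorem would supply a functional $\varphi \in \ix^*$ with $\varphi \equiv 0$ on $\R(A)$ and $\varphi(u) \neq 0$; since $\upara \in \R(A)$ we would then have $\varphi(\upara) = 0$ for all $\para > 0$, contradicting $\upara \to u$.
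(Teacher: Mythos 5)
Your argument is correct, and it takes a genuinely different route from the paper. The paper invokes the decomposition $u = \ur + \un$ with $\ur \in \overline{\R(A)}$, $\un \in \N(A)$ from Lemma \ref{th:r-n-decomp} (this is where reflexivity enters), applies Proposition \ref{th:direct_exact_data} to the component $\ur$ to get $\para (A+\para I)^{-1}u \to \un$, and concludes $\un = 0$ from the hypothesis. You instead observe that $\upara = (A+\para I)^{-1}Au = A(A+\para I)^{-1}u \in \R(A)$ for every $\para > 0$ (one can even avoid the commutation step by writing $\para\upara = A(u-\upara)$, i.e.\ $\upara = A\bigl(\para^{-1}(u-\upara)\bigr)$), so the assumed convergence $\eparauy$ places $u$ in the closed subspace $\overline{\R(A)}$. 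Your observation that reflexivity is not needed is accurate: your proof is valid for any \postype bounded linear operator on an arbitrary Banach space, so for this particular theorem the hypothesis can be weakened. What the paper's decomposition approach buys is extra information that is reused throughout the section: it identifies the limit of the bias as $-\un$, i.e.\ the nullspace component of $u$ is exactly the obstruction to convergence, and the same decomposition machinery reappears in Proposition \ref{th:converse_noisy_data_2} and elsewhere; your approach is the more elementary and more general one for the statement as such. The Hahn--Banach variant you sketch is also fine, though redundant once closedness of $\overline{\R(A)}$ is granted.
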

\begin{proof}
%%%We note first that $ \overline{\R(A)} \oplus \N(A) = \ix $
We consider the decomposition $ u = \ur + \un $ with 
$ \ur \in \overline{\R(A)}, \, \un \in \N(A) $,
see Lemma \ref{th:r-n-decomp} in the appendix.
%%% Note that
%%%$ \overline{\R(A)} \oplus \N(A) = \ix $ holds
%%(see, e.g., \cite[Theorem 1.1.10]{Plato[95]}).
This decomposition yields
%%%and obtain the following:
%
\begin{align*}
\para (A + \para I)^{-1}u 
= \para (A + \para I)^{-1}\ur + \un \to \un 
\quad \textup{as } \para \to 0
\end{align*}
according to Proposition \ref{th:direct_exact_data}.
The assumption of the theorem now implies $ \un = 0 $, and
from this the statement of the theorem follows.
\end{proof}
\begin{theorem}
\label{th:converse_exact_data_2}
\axruassump.
If $ \norm{\eparaux} = \Landau(\para^\pp) $ as $ \para \to 0 $ holds for some 
$ 0 < \pp < 1 $,
then we have $ u \in \R(A^\qq) $ for each $ 0 < \qq < \pp $.
\end{theorem}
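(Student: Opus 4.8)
The plan is to exhibit an explicit element $w \in \ix$ with $A^\qq w = u$, built from a Balakrishnan-type integral whose convergence is forced by the assumed rate. First I would restate the hypothesis in resolvent form. By \refeq{epara-rep} we have $\norm{\eparaux}=\norm{\fpara}=\para\,\norm{(A+\para I)^{-1}u}$, so $\norm{\eparaux}=\Landau(\para^\pp)$ furnishes a constant $C>0$ and some $s_0>0$ with
\[
\norm{(A+sI)^{-1}u} \le C\,s^{\pp-1}\quad(0<s\le s_0),
\qquad
\norm{(A+sI)^{-1}u} \le \tfrac{M\norm{u}}{s}\quad(s>0),
\]
the second bound being merely \refeq{postype}.

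Fix $0<\qq<\pp$ and set
\[
w := \frac{\sin\pi\qq}{\pi}\int_0^\infty s^{-\qq}(A+sI)^{-1}u\,ds .
\]
The integrand is continuous in $s$ on $(0,\infty)$, and by the two bounds above its norm does not exceed $C\,s^{\pp-\qq-1}$ for $0<s\le s_0$ and $M\norm{u}\,s^{-\qq-1}$ for $s\ge s_0$. Since $\pp-\qq-1>-1$ --- this is the only place where the strict inequality $\qq<\pp$ is used --- and $-\qq-1<-1$, the integral converges absolutely in the Bochner sense, so $w$ is a well-defined element of $\ix$.

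It remains to prove $A^\qq w=u$; this is the crux, and the absence of a spectral decomposition makes it the delicate step. I would argue by regularization. For $\varepsilon>0$ put $A_\varepsilon:=A+\varepsilon I$; then $A_\varepsilon$ is again \postype, with $\norm{(A_\varepsilon+sI)^{-1}}=\norm{(A+(s+\varepsilon)I)^{-1}}\le M/s$, and it has a bounded inverse. For \postype operators with bounded inverse the negative fractional power is bounded, is given by the Balakrishnan integral $A_\varepsilon^{-\qq}=\tfrac{\sin\pi\qq}{\pi}\int_0^\infty s^{-\qq}(A_\varepsilon+sI)^{-1}\,ds$, and satisfies $A_\varepsilon^\qq A_\varepsilon^{-\qq}=I$ (see, e.g., Kato~\cite{Kato[62]} and Komatsu~\cite{Komatsu[69.1]}). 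Applying this at $u$ and substituting $A_\varepsilon+sI=A+(s+\varepsilon)I$ gives $A_\varepsilon^{-\qq}u=\tfrac{\sin\pi\qq}{\pi}\int_0^\infty s^{-\qq}(A+(s+\varepsilon)I)^{-1}u\,ds$; since $\pp-1<0$, for all sufficiently small $\varepsilon$ the integrands are dominated by a fixed integrable majorant of the same type as above, so dominated convergence yields $A_\varepsilon^{-\qq}u\to w$ as $\varepsilon\to0^+$. On the other hand $A_\varepsilon^\qq\to A^\qq$ strongly as $\varepsilon\to0^+$ (continuity of fractional powers in the operator), and the standard estimate $\norm{B^\qq}\le c(\qq,M)\,\norm{B}^\qq$ for \postype $B$ gives $\sup_{0<\varepsilon\le1}\norm{A_\varepsilon^\qq}<\infty$. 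From the decomposition
\[
A^\qq w-u = \bigl(A^\qq w-A_\varepsilon^\qq w\bigr)+A_\varepsilon^\qq\bigl(w-A_\varepsilon^{-\qq}u\bigr)+\bigl(A_\varepsilon^\qq A_\varepsilon^{-\qq}u-u\bigr),
\]
all three terms on the right tend to $0$ as $\varepsilon\to0^+$ (the last vanishes identically), while the left-hand side does not depend on $\varepsilon$; hence $A^\qq w=u$, i.e.\ $u\in\R(A^\qq)$.

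\textbf{On the main obstacle.} The delicate point is the identity $A^\qq w=u$. Regularization is my preferred route precisely because the direct computation --- pulling the bounded operator $A^\qq$ into the Bochner integral defining $w$, inserting the representation \refeq{frac-power}, applying the resolvent identity $(A+tI)^{-1}(A+sI)^{-1}=(s-t)^{-1}\bigl[(A+tI)^{-1}-(A+sI)^{-1}\bigr]$, interchanging the order of integration, and evaluating the scalar integrals $\textup{p.v.}\int_0^\infty\tau^{\qq-1}(1-\tau)^{-1}\,d\tau=\pi\cot\pi\qq$ and $\int_0^\infty\tau^{-\qq}(1+\tau)^{-1}\,d\tau=\pi/\sin\pi\qq$ --- must contend with the non-integrable diagonal singularity of the kernel $(s-t)^{-1}$, whereas passing from $A$ to $A_\varepsilon$ removes it. It is worth noting that the construction genuinely breaks down as $\qq\uparrow\pp$: the majorant $s^{\pp-\qq-1}$ ceases to be integrable at $\qq=\pp$, which already hints that the endpoint $\qq=\pp$ is special and accounts for the saturation phenomenon.
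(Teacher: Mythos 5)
Your proposal is correct, and at its core it is the same construction the paper uses: the candidate preimage is exactly the Komatsu/Balakrishnan integral $w=\frac{\sin\pi\qq}{\pi}\int_0^\infty s^{-\qq}(A+sI)^{-1}u\,ds$, whose absolute convergence is precisely where the hypothesis $\norm{\eparaux}=\Landau(\para^\pp)$ (equivalently $\norm{(A+sI)^{-1}u}\le Cs^{\pp-1}$ near $0$) and the strict inequality $\qq<\pp$ enter. The difference lies in how the key identity $A^\qq w=u$ is justified: the paper simply cites Komatsu's theory of negative fractional powers (his estimate (3.7) to place $u$ in the domain of $A^{-\qq}$, the representation (4.10) for $A^{-\qq}u$, and Proposition 4.13 for $A^{\qq}A^{-\qq}u=u$), whereas you prove this identity from scratch by regularizing with the invertible operators $A+\varepsilon I$, using dominated convergence for $(A+\varepsilon I)^{-\qq}u\to w$, the uniform bound $\norm{B^\qq}\le c\norm{B}^\qq$, and the (norm) convergence $(A+\varepsilon I)^\qq\to A^\qq$. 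Your route buys a self-contained argument that only uses standard facts about fractional powers of invertible \postype operators, at the cost of invoking (without proof, but correctly) those standard facts; the paper's route is shorter but entirely dependent on Komatsu's machinery. The asserted auxiliary facts in your argument are all true — e.g.\ $\norm{(A+\varepsilon I)^\qq-A^\qq}=\Landau(\varepsilon^\qq)$ follows from the resolvent identity applied inside the defining integral \refeq{frac-power} — so I see no gap; note also that, like the paper's proof, your argument nowhere needs reflexivity of $\ix$.
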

\begin{proof}
We shall make use of some of the results in Komatsu~\cite[Sections 2 and 3]{Komatsu[66]}. 
The negative fractional power $ A^{-\qq} : \ix \supset \D \to \ix $ is defined in a direct way there, with a domain of definition $ \D $
that, 
under the assumptions on the asymptotical behaviour of $ \norm{\eparaux} $ made in our theorem,
%%%i.e., 
%%%$ \para (A + \para I)^{-1}u = \Landau(\para^\pp) $ as $ \para \to 0 $,
contains $ u $,
cf.~\cite[estimate (3.7)]{Komatsu[66]}. 
We have
$ A^{-\qq}u = \frac{\sin \pi \qq }{\pi}
\int_0^\infty s^{-\qq} (A + sI)^{-1} u \ ds
$ in fact
(see \cite[equation (4.10)]{Komatsu[66]} for the details).
From \cite[Proposition 4.13]{Komatsu[66]} the identity
$ A^{\qq}(A^{-\qq} u) = A^{\qq-\qq} u = u $ then follows 
 which means
$ u \in \R(A^\qq) $, and this completes the proof of the theorem.
\end{proof}

\bn
We note that the statement of Theorem
\ref{th:converse_exact_data_2} cannot be extended to the case $ \qq = \pp $, in general. 
For a counterexample related with Tikhonov regularization, see
Neubauer~\cite[p.~521]{Neubauer[97]}.
In the case $ \pp = 1 $, the situation is different: 
%%%the result of Theorem \ref{th:converse_exact_data_2}
%%%may be improved.
%%%n the case $ \pp = 1 $, the result of Theorem \ref{th:converse_exact_data_2}
%
\begin{theorem}
\label{th:converse_exact_data_3}
\axruassump. 
In the case $ \norm{\eparaux} = \Landau(\para) $ as $ \para \to 0 $
we necessarily have $ u \in \R(A) $.
\end{theorem}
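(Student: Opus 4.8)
The plan is to exploit the representation $\fpara = -\para(A+\para I)^{-1}u$ together with the hypothesis $\norm{\fpara} = \Landau(\para)$ to produce, in the limit $\para\to 0$, an element $w\in\ix$ with $Aw = u$. Concretely, set $w_\para := -(A+\para I)^{-1}u = \para^{-1}\fpara$; the assumption says exactly that $\norm{w_\para}$ stays bounded as $\para\to 0$. If one can show $w_\para$ converges (or at least has a limit point) to some $w\in\ix$, then applying $A$ and using $A w_\para = (A+\para I)w_\para - \para w_\para = -u - \para w_\para \to -u$ gives $Aw = -u$, whence $u = A(-w)\in\R(A)$. So the whole argument reduces to extracting a limit of the bounded family $\{w_\para\}$.

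First I would establish that $w_\para$ is actually Cauchy as $\para\to 0$ — this is where the work lies. Using the resolvent identity, for $0<\para<\para'$ one has $w_{\para'}-w_\para = (A+\para' I)^{-1}(A+\para I)^{-1}u\,(\para-\para')$, i.e. $w_{\para'}-w_\para = (\para-\para')(A+\para' I)^{-1}w_\para$. Then, invoking the \postype bound, $\norm{w_{\para'}-w_\para}\le M\frac{\para'-\para}{\para'}\norm{w_\para}\le M\norm{w_\para}$, which is only bounded, not small — so a little more care is needed. The cleaner route is the ``interpolation/moment'' trick: write $w_{\para'}-w_\para$ as an integral $\int_{\para}^{\para'}\frac{d}{ds}w_s\,ds = \int_\para^{\para'}(A+sI)^{-2}u\,ds$ and estimate $\norm{(A+sI)^{-2}u} = \norm{(A+sI)^{-1}w_s}\le \frac{M}{s}\norm{w_s}\le \frac{M C}{s}$ where $C:=\sup_{\para>0}\norm{w_\para}<\infty$; this still only gives a logarithmic bound on $\norm{w_{\para'}-w_\para}$, which is not enough either.

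The resolution I would adopt is to pass through weak or weak-$*$ sequential compactness combined with a density argument, paralleling the reflexive-space hypothesis ``$\axruassump$'' that is in force: since $\ix$ is reflexive and $\{w_\para\}$ is norm-bounded, a sequence $w_{\para_n}$ (with $\para_n\to 0$) has a weakly convergent subsequence $w_{\para_n}\rightharpoonup w$. Applying the bounded operator $A$ preserves weak convergence, so $Aw_{\para_n}\rightharpoonup Aw$; but we computed $Aw_{\para_n} = -u-\para_n w_{\para_n}\to -u$ in norm, hence $Aw = -u$ and $u\in\R(A)$. The main obstacle is therefore purely the compactness step — one must confirm reflexivity of $\ix$ is genuinely available here (it is, from $\axruassump$) and that no separability or selfadjointness is secretly needed; alternatively, if one wants to avoid weak compactness, one can instead quote the identification of $\R(A)$ with the set of $u$ for which $\sup_{\para>0}\para^{-1}\norm{(A+\para I)^{-1}u}<\infty$, which is the $\pp=1$ endpoint of the negative-fractional-power characterization from Komatsu~\cite{Komatsu[66]} used in the proof of Theorem~\ref{th:converse_exact_data_2}; citing that directly makes the proof a one-liner, at the cost of leaning on the external reference.
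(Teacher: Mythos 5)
Your proposal is correct and follows essentially the same route as the paper: both use reflexivity of $\ix$ to extract a weak limit point $v$ of the norm-bounded family $(A+\para I)^{-1}u$ along a sequence $\para_n \to 0$, apply $A$ to pass weak convergence through, and identify the weak limit $Av$ with the strong limit of $A(A+\para_n I)^{-1}u$ to conclude $u \in \R(A)$. The only (harmless) difference is that you obtain the strong convergence $A(A+\para_n I)^{-1}u \to u$ directly from the $\Landau(\para)$ hypothesis via the identity $A(A+\para I)^{-1}u = u + \fpara$, whereas the paper routes it through Theorem~\ref{th:converse_exact_data_1} and Proposition~\ref{th:direct_exact_data}.
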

\proof
%%``$\Longleftarrow$'': 
%%We have 
%%$ \norm{(A + \para I)^{-1} A} = \Landau(1) $ as $ \para \to 0 $
%%%which already completes the proof of this implication.
%%
%%``$\Longrightarrow$'': 
%%%
From Theorem \ref{th:converse_exact_data_1} we obtain $ u \in \overline{\R(A)} $, and we next show that $ u \in \R(A) $ holds.
By assumption we have $ \norm{(A + \para I)^{-1} u} = \Landau(1) $ as $ \para \to 0 $,
and thus there exists an element $ v \in \ix $ and a sequence $ (\para_n) $ of positive real numbers with $ \para_n \to 0 $ as $ n \to \infty $ such that we have weak convergence
$ (A + \para_n I)^{-1} u \rightharpoonup v $ as $ n \to \infty $.
From this, weak convergence 
$ A (A + \para_n I)^{-1} u \rightharpoonup Av $ as $ n \to \infty $ follows.
On the other hand,
due to
$ u \in \overline{\R(A)} $
%%% (see part a) of this proof),
we have strong convergence
$ A (A + \para_n I)^{-1} u \to u $ as $ n \to \infty $, and this shows
$ A v = u $. 
\endproof
%
%%
%%Note that the result of Theorem \ref{th:converse_exact_data_2}
%%%for the case $ \pp = 1 $ is sharper that similar results 
%%%for $ 0 < \pp < 1 $ presented in Theorem \ref{th:converse_exact_data_2}
%%%may be improved.
%%%n the case $ \pp = 1 $, the result of Theorem \ref{th:converse_exact_data_2}

\subsection{Saturation \forexdat}
We next present a saturation result for \lavmet with exact data.
\begin{theorem}
\label{th:saturation_exact_data}
\axruassump.
If $ \norm{\eparaux} = \landau(\para) $ as $ \para \to 0 $,
then necessarily $ u = 0 $ holds.
\end{theorem}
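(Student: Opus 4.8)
The plan is to argue by contradiction, assuming $u \neq 0$ and deriving a contradiction with $\norm{\eparaux} = \landau(\para)$. First I would invoke Theorem \ref{th:converse_exact_data_3}: since $\landau(\para) \subset \Landau(\para)$, the hypothesis already forces $u \in \R(A)$, so we may write $u = Av$ for some $v \in \ix$. Using the representation \refeq{epara-rep}, namely $\fpara = -\para(A+\para I)^{-1}u$, together with $u = Av$, we get
\begin{align*}
\fpara = -\para (A+\para I)^{-1} A v = -\para\bigl(v - \para (A+\para I)^{-1} v\bigr) = -\para v + \para^2 (A+\para I)^{-1} v.
\end{align*}
Since $\norm{\para^2 (A+\para I)^{-1} v} \le M \para \norm{v} \to 0$ relative to $\para$ only at rate $\Landau(\para)$, this shows $\norm{\fpara}/\para \to \norm{v}$ is not immediately forced to zero — rather, $\fpara/\para = -v + \Landau(\para)$, so $\norm{\fpara}/\para \to \norm{v}$. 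The hypothesis $\norm{\fpara} = \landau(\para)$ then gives $\norm{v} = 0$, i.e. $v = 0$, hence $u = Av = 0$.

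Wait — I should double-check the middle term: is it true that $\para^2(A+\para I)^{-1}v \to 0$ in norm as $\para \to 0$, and not merely bounded? From $\norm{(A+\para I)^{-1}} \le M/\para$ we only get $\norm{\para^2(A+\para I)^{-1}v} \le M\para\norm{v}$, which does tend to $0$; so actually $\fpara/\para \to -v$ strongly, and the argument goes through cleanly. Thus the reduction via Theorem \ref{th:converse_exact_data_3} is the key move: it converts the source condition into a genuine preimage $u = Av$, after which the algebraic identity $(A+\para I)^{-1}A = I - \para(A+\para I)^{-1}$ does all the work.

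The main obstacle I anticipate is making sure Theorem \ref{th:converse_exact_data_3} is genuinely applicable and that its conclusion $u \in \R(A)$ suffices — one must be confident that "$u \in \R(A)$" (and not some weaker statement like $u \in \overline{\R(A)}$) is what comes out, since the whole computation hinges on having an actual element $v$ with $Av = u$. If for some reason one wanted a self-contained proof not citing Theorem \ref{th:converse_exact_data_3}, the harder route would be to first re-derive $u \in \R(A)$ from the stronger hypothesis $\landau(\para)$ directly, which essentially repeats that theorem's weak-compactness argument; but since we are permitted to assume earlier results, citing Theorem \ref{th:converse_exact_data_3} is the clean path and the remainder is routine. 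I would therefore keep the write-up short: cite Theorem \ref{th:converse_exact_data_3}, write $u = Av$, expand $\fpara$ via \refeq{epara-rep}, observe $\fpara = -\para v + \para^2(A+\para I)^{-1}v$ with the last term $\Landau(\para^2)$ in fact $\landau$... no, $\Landau(\para)$ suffices since we compare $\fpara/\para$ with $-v$, and conclude $\norm{v} = \lim \norm{\fpara}/\para = 0$.
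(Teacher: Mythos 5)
There is a genuine gap at the decisive limit. From $\norm{(A+\para I)^{-1}} \le M/\para$ you correctly get $\norm{\para^2(A+\para I)^{-1}v} \le M\para\norm{v}$, but after dividing by $\para$ the remainder in $\fpara/\para = -v + \para(A+\para I)^{-1}v$ is only bounded by $M\norm{v}$; it is not $\Landau(\para)$ and need not tend to $0$. Indeed $\para(A+\para I)^{-1}v = -\fparau{v}$, and in the reflexive setting (Lemma \ref{th:r-n-decomp} together with Proposition \ref{th:direct_exact_data}) it converges to the nullspace component $v_N$ of the decomposition $v = v_R + v_N$ with $v_R \in \overline{\R(A)}$, $v_N \in \N(A)$. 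Hence $\fpara/\para \to -v_R$, not $-v$; for a preimage $v$ with $v_N \neq 0$ your claimed limit $\norm{\fpara}/\para \to \norm{v}$ is false, so the sentence ``so actually $\fpara/\para \to -v$ strongly, and the argument goes through cleanly'' is exactly where the proof breaks. Your own earlier self-check confused the correct statement $\para^2(A+\para I)^{-1}v \to 0$ with the needed statement $\para(A+\para I)^{-1}v \to 0$.

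The argument is repairable, but the repair needs an extra ingredient beyond the resolvent bound: since $u = Av = Av_R$, choose the preimage in $\overline{\R(A)}$ from the start (possible by Lemma \ref{th:r-n-decomp}); then Proposition \ref{th:direct_exact_data} gives $\para(A+\para I)^{-1}v_R \to 0$, hence $\fpara/\para \to -v_R$, and the hypothesis $\norm{\fpara} = \landau(\para)$ forces $v_R = 0$, i.e.\ $u = Av_R = 0$. Note also that the paper's own proof avoids Theorem \ref{th:converse_exact_data_3} and any preimage altogether: the hypothesis yields $(A+\para I)^{-1}u \to 0$, hence $A(A+\para I)^{-1}u \to 0$, while the same expression equals $u - \para(A+\para I)^{-1}u \to u$ (Proposition \ref{th:direct_exact_data}, applicable since Theorem \ref{th:converse_exact_data_1} gives $u \in \overline{\R(A)}$); comparing the two limits gives $u = 0$. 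That route is shorter and sidesteps the point at which your argument fails.
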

\proof
%%The implication ``$\Longleftarrow$'' is obvious, and we next show that
%%``$\Longrightarrow$'' holds. 
By assumption we have
$ (A + \para I)^{-1}u \to 0 $ and thus
$ (A + \para I)^{-1} Au \to 0 $ as $ \para \to 0 $.
For the same term there also holds 
%%%% on the other hand
$ (A + \para I)^{-1} A u = u - \para (A + \para I)^{-1} u \to u $ as $ \para \to 0 $
according to Proposition \ref{th:direct_exact_data}, and this implies $ u = 0 $.
Note that it follows from Theorem \ref{th:converse_exact_data_1} that $ u \in \overline{\R(A)} $ holds, thus Proposition \ref{th:direct_exact_data} indeed may be applied here.
This completes the proof.
%%%$ \upara \to u $ as $ \para \to 0 $, 
%%%and Theorem \ref{th:converse_exact_data_1} implies $ u \in \overline{\R(A)} $ then.
\endproof
\subsection{Some additional observations}
Some conclusions of this section remain true
under weaker hypotheses.
Details are given in the following corollary.
As a preparation we introduce the notation 
$ \reza_+ = \{ \para \in \reza \mid \para > 0 \} $.
\begin{corollary}
\label{th:liminf-weakening-exact-data}
\axruassump.
\begin{myenumerate_indent}
\item
\label{liminf-weakening-exact-data_1}
If there exists some sequence $ (\para_n) \subset \reza_+ $
%%= \{ \para > 0 \}
with
$ 
\lim_{n \to \infty} \para_n = 0 $ such that
$ \norm{\eparauxn} = \Landau(\para_n) $ as $ n \to \infty $ holds,
then we necessarily have $ u \in \R(A) $.

\item
\label{liminf-weakening-exact-data_2}
%%%%If $ \liminf_{\para \to 0} \norm{\fpara}/\para = 0 $ holds,
Suppose that for some sequence $ (\para_n) \subset \reza_+ $
with
$ \lim_{n \to \infty} \para_n =0 $ there holds
\linebreak $ \norm{\eparauxn} = \landau(\para_n) $ as $ n \to \infty $. 
Then we necessarily have $ u = 0 $.

\item
\label{liminf-weakening-exact-data_3}
If $ u \not \in \R(A) $ holds, then 
we have $ \norm{(A + \para I)^{-1}u } \to \infty $
as $ \para \to 0 $.

\item
\label{liminf-weakening-exact-data_4}
If $ u \neq 0 $, then there exists a constant $ c > 0 $ such that
$ \norm{\fpara} \ge c \para $ 
%%$ c \para \le \norm{\fpara} $ 
for $ \para > 0 $ small.
\end{myenumerate_indent}
\end{corollary}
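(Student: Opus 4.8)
The plan is to establish parts (a) and (b) directly and to obtain parts (c) and (d) from them by contraposition. Throughout I would write $w_n := (A+\para_n I)^{-1}u$, so that $\fpara[\para_n] = -\para_n w_n$ and, subtracting $\para_n w_n$ from both sides of $(A+\para_n I)w_n = u$, also $Aw_n = u-\para_n w_n = u+\fpara[\para_n]$.

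For part (a): the hypothesis $\norm{\fpara[\para_n]} = \Landau(\para_n)$ is equivalent to boundedness of the sequence $(\norm{w_n})_n$. Since $\ix$ is reflexive, a subsequence of $(w_n)$ converges weakly, say $w_n \rightharpoonup v$; continuity of $A$ then gives $Aw_n \rightharpoonup Av$ along that subsequence. At the same time $\norm{\para_n w_n}\to 0$, because $\para_n\to 0$ and $(\norm{w_n})$ is bounded, so $Aw_n = u-\para_n w_n \to u$ in norm, hence weakly; comparing the two weak limits yields $Av=u$, i.e.\ $u\in\R(A)$. This runs parallel to the proof of Theorem \ref{th:converse_exact_data_3}. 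The only new point is that here we have only a sequence at our disposal, so Theorem \ref{th:converse_exact_data_1} is not available to supply $u\in\overline{\R(A)}$ --- but that conclusion is not needed, since $\norm{\fpara[\para_n]}\to 0$ already delivers the strong convergence $Aw_n\to u$.

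For part (b): I would argue without any compactness. From $\norm{\fpara[\para_n]} = \landau(\para_n)$ we get $\norm{w_n}=\norm{\fpara[\para_n]}/\para_n\to 0$, and then $u=(A+\para_n I)w_n$ gives $\norm{u}\le(\norm{A}+\para_n)\norm{w_n}$, whose right-hand side tends to $0$ because $(\para_n)$ is bounded; as $u$ is fixed, $u=0$. (This is also the mechanism behind Theorem \ref{th:saturation_exact_data}.) Part (c) then follows by contraposition from (a): if $\norm{(A+\para I)^{-1}u}$ does not tend to $\infty$ as $\para\to 0$, some sequence $\para_n\to 0$ keeps $\norm{(A+\para_n I)^{-1}u}$ bounded, i.e.\ $\norm{\fpara[\para_n]}=\Landau(\para_n)$, whence $u\in\R(A)$ by (a), contradicting the hypothesis of (c). Similarly, part (d) follows by contraposition from (b): if no constant $c>0$ works, then $\liminf_{\para\to 0}\norm{\fpara}/\para=0$, so there is a sequence with $\norm{\fpara[\para_n]}=\landau(\para_n)$ and $\para_n\to 0$, and (b) forces $u=0$, contradicting $u\neq 0$.

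The single genuinely non-routine step is the weak-compactness argument in part (a): extracting a weakly convergent subsequence of the bounded sequence $(w_n)$, which is precisely where reflexivity of $\ix$ enters. All the rest amounts to bookkeeping with the identities $\fpara=-\para(A+\para I)^{-1}u$ and $A(A+\para I)^{-1}u=u+\fpara$ and with $\para_n\to 0$.
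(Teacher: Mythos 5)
Your proposal is correct and follows essentially the same route as the paper, which proves parts (a) and (b) by redoing the arguments of Theorems \ref{th:converse_exact_data_3} and \ref{th:saturation_exact_data} along subsequences and then obtains (c) and (d) as the logical negations of (a) and (b). Your only deviation is a harmless streamlining: in (a) you get the strong convergence $A(A+\para_n I)^{-1}u \to u$ directly from $\norm{\fpara[\para_n]}=\Landau(\para_n)\to 0$ rather than via $u\in\overline{\R(A)}$, and in (b) you read off $u=(A+\para_n I)w_n\to 0$ in norm, both of which are valid.
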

\begin{proof}
Parts 
\ref{liminf-weakening-exact-data_1} and \ref{liminf-weakening-exact-data_2}
%% a) and b) 
follow similarly to the proofs of Theorems \ref{th:converse_exact_data_3}
and \ref{th:saturation_exact_data}, respectively;
one has to consider subsequences in those proofs then.
%%%instead.
%%%Part b) is the logical negation of part a).
%%Part b) is the logical negation of part (b).
Parts
%%% c) and d) 
\ref{liminf-weakening-exact-data_3} and \ref{liminf-weakening-exact-data_4}
are the logical negation of parts 
\ref{liminf-weakening-exact-data_1} and \ref{liminf-weakening-exact-data_2}, respectively.
\end{proof}

\bn
%%%The generalization in part \ref{liminf-weakening-exact-data_1}
%%%of the preceding theorem is only relevant for the case $ M>1 $ in \refeq{postype}.
We note that in the case 
``$ \ix $ Hilbert space, $ M = 1 $'' (the operator $ A $ is \accretive then, cf.~the following section),
the modified hypotheses
in parts
\ref{liminf-weakening-exact-data_1} and \ref{liminf-weakening-exact-data_2}
of the preceding corollary
coincide with the original hypotheses considered
in Theorems~\ref{th:converse_exact_data_3} and
\ref{th:saturation_exact_data}, respectively. 
This is an immediate result of
the monotonicity of the functional
%%%$ \para \to \norm{\fpara} $,
%%% and 
$ \para \to \norm{\fpara}/\para $, 
%%cf.~Lemma~\ref{th:bias-monotone} 
cf.~Lemma~\ref{th:resolvent-monotone}
in the appendix.

We conclude this section with a remark
on the closed range case.
\begin{remark}
%%%[Closed range case]
\label{th:closed_range_exact_data}
Note that throughout this section we do not require that the range $ \R(A) $ 
is non-closed. In case of a closed range, i.e., $ \R(A) = \overline{\R(A)} $,
the results of
Proposition \ref{th:direct_exact_data}
and Theorems~\ref{th:converse_exact_data_1} and \ref{th:converse_exact_data_3} can be summarized as follows:
\begin{align*}
\lim_{\para \to 0} \upara = u
\ \Longleftrightarrow  \
%%%u \in \overline{\R(A)}
u \in \R(A)
\ \Longleftrightarrow  \
\norm{\eparaux} = \Landau(\para) \textup{ as } \para \to 0.
\end{align*}
The case $ 0 < p < 1 $ considered in Theorem \ref{th:converse_exact_data_2} is not relevant in the closed range case, while the saturation case considered in Theorem \ref{th:saturation_exact_data} still is.
\remarkend
\end{remark}
%
%%\section{An optimality result, and saturation \fornoisdat}
%%%\label{pseudooptimal}
%%
\section{Optimality concepts}
\label{optimality}
\subsection{Preliminaries}
%%%In this section we discuss the \quasioptimality of parameter choices for \lavsmet. 
This section serves on the one hand as a preparation for the noisy data related 
converse and saturation results presented in the subsequent sections.
The results of the present section, however, may be of independent interest: the impact on the optimality of parameter choices for \lavmet is also established.

%%%As a byproduct, a saturation result is obtained.
Our main results in this section are obtained for operators on Hilbert spaces, but the preliminaries presented in this first subsection are considered for Banach spaces.
So, throughout the present subsection we assume that $ A: \ix \to \ix $ is a \postype bounded linear operator on a Banach space $ \ix $. Our main focus is on operators $ A $ with a non-closed range $ \R(A) $ or a nontrivial nullspace $ \N(A) $. 
%%%\marginpar{Check!}%

%%%In the first part of this section we consider a \postype bounded linear operator
%%%$ A: \ix \to \ix $ on a Banach space $ \ix $. 
%
%%%%For $ u \in \ix $ fixed we now introduce the following notations:
%
%%%\begin{align}
%%%\uparafdel[g] = (A + \para I)^{-1} g, \quad g \in \ix, \ \para > 0.
%%\qquad \upara = \uparafdel[Au],
%%%\qquad
%%%\epara = \upara - u,
%%%\label{eq:uparafdel_def}
%%%\end{align}
%
%%where $ \para > 0 $.
The \textit{maximal best possible error} of \lavsmet with respect to a given $ u \in \ix $ and a noise level $ \delta > 0 $ is given by
\begin{align}
\Pdeltau  & := 
\sup_{\fdelta: \, \norm{Au-\fdelta} \le \delta }
\ 
%%%\inf_{\para > 0 } \norm{(A+\para  I)^{-1}\fdelta - u }
\inf_{\para > 0 } \norm{\uparadel - u }
\nonumber
\\
& =
\sup_{\Ddelta \in \ix: \, \norm{\Ddelta} \le \delta }
\ 
\inf_{\para > 0 } \norm{\fpara + (A+\para I)^{-1} \Ddelta }.
\label{eq:pdelta}
\end{align} 
The quantity $ \Pdeltau $ may serve as a tool for considering \quasioptimality of special parameter choices for \lavmet, cf.~Definition \ref{th:optimality-def} below. 
%%%We will discuss this kind of optimality 
%%to some extent
%%%in Section~\ref{pseudooptimal}.
%
%%%First we recall well-known results on the asymptotic behaviour
%%%of $ \Pdeltau $ as $ \delta \to 0 $, cf.~the following proposition.
%
%%%The best possible maximal error $ \Pdeltau $ introduced in \refeq{pdelta} can be used as a tool for defining optimality of parameter choice strategies for \lavmet, cf. the Definition \ref{th:optimality-def} below. 
First, however, we introduce other quantities that are often used in this direction:
%
%%%As a preparation we introduce
for $ u \in \ix, \ \delta > 0 $ and $ 1 \le p \le \infty $
we define
\begin{align}
\Rdeltau{p}  := 
\left\{
\begin{array}{lr}
\displaystyle
\inf_{\para > 0 } \big\{ \norm{\eparaux }^p + \big(M\tfrac{\delta}{\para}\big)^p \}^{1/p},
& \textup{if } p < \infty, \\
\displaystyle
\inf_{\para > 0 } \max\big\{ \norm{\eparaux },  M \tfrac{\delta}{\para} \},
& \textup{if } p = \infty,
\end{array}
\right.
\label{eq:rdelta}
\end{align}
where $ M $ is the constant from \refeq{postype}.
Similar to some relations between $ p $-norms on $ \reza^2 $, we obviously have 
$ \Rdeltau{\infty} \le \Rdeltau{p} \le \Rdeltau{1} \le 2\Rdeltau{\infty} $
for each $ u \in \ix $ and $ \delta > 0 $, with $ 1 < p < \infty $.
%%%the constant $ M \ge 1 $ taken from \refeq{postype}.
The most important quantity from this set of numbers is $ \Rdeltau{1} $. This is due to the fact that
$ \norm{\uparadel - u } \le \norm{\eparaux } + M\frac{\delta}{\para} $ holds
for each $ \fdelta \in \ix $ with $ \norm{Au-\fdelta} \le \delta $.

We next introduce two notations related with the optimality of parameter choices;
see Raus/H\"amarik~\cite{Raus_Haemarik[07]} for similar notations.
Other optimality concepts 
%%%of regularization methods
can be found in
%%%\linebreak
Vainikko~\cite{Vainikko[87.1]}.
\begin{definition}
\label{th:optimality-def}
\axassump.
We call a parameter choice $ 0 < \para = \para(\delta,\fdelta) \le \infty $
(with the notation $ \uparadel[\infty] := 0 $)
for \lavmet
\begin{mylist_indent}
\item
\emph{\orderoptimal}, if there exists a constant $ c > 0 $ such that
for each $ u \in \ix, \, \delta > 0 $ and $ \fdelta \in \ix $ with $ \norm{Au-\fdelta} \le \delta $,
we have $ \norm{\uparadelc - u } \le c \Pdeltau $,

\item
\emph{\pseudooptimal}, if there exists a constant $ c > 0 $ such that
for each $ u \in \ix, \, \delta > 0 $ and $ \fdelta \in \ix $ with $ \norm{Au-\fdelta} \le \delta $,
%%%we have $ \norm{\eparaub } + \tfrac{\delta}{\parac}
we have $ \norm{\uparadelc - u } \le c \Rdeltau{1} $.
%%%  \inf_{\para > 0 } \big\{ \norm{\eparaub } + \tfrac{\delta}{\parab} \}
\end{mylist_indent}
%
%%%The considered constant $ c $ is in both cases also required to be independent from the operator $ A $. 
\end{definition}
We obviously have $ \Pdelta(u) \le \Rdeltau{1} $ for each
$ u \in \ix $ and $ \delta > 0 $, so each \orderoptimal parameter choice is \pseudooptimal.

We now consider a modified discrepancy principle (sometimes called MD rule)
%%%parameter choice strategy 
which turns out to be a \pseudooptimal
parameter choice strategy 
for \lavmet in Banach spaces.
\begin{example}
\label{th:discrepancy_principle_mod}
Fix  real numbers $ b_1 \ge b_0 > M $, and let 
$ \Deltapara = A\uparadel - \fdelta $ for $ \para > 0 $.
Consider the following parameter choice strategy:
\begin{mylist}
\item
If $ \norm{\fdelta } \le b_1 \delta, $ then take $ \para = \infty $.
%%%%i.e., $ \uparadel[\infty] = 0 $.

\item
Otherwise choose $ 0 < \para = \parac < \infty $ such that
$ b_0 \delta \le  \norm{ \para (A+\para I)^{-1} \Deltapara } \le  b_1 \delta $ holds.
\end{mylist}
%
%%It is shown in Raus~\cite{Raus[84]} that this parameter choice strategy is \orderoptimal if $ A $ is a selfadjoint operator on a Hilbert space. 
%%
It is shown in Plato/H\"amarik~\cite[Parameter Choice 4.1 and Theorem 4.4]{Plato_Haemarik[96]} that this parameter choice strategy is \pseudooptimal,
%% the notion used there is ``pseudo-optimal'' in fact.
if  $ A : \ix \to \ix $ is a \posboundlinear operator on a Banach space $ \ix $.

It is an open problem, in case of ill-posed problems, if the modified discrepancy principle %%%considered in Example \ref{th:discrepancy_principle_mod} 
is \orderoptimal in such a general setting. For \accretive operators on Hilbert spaces, however, \orderoptimality can be verified. Details are given in the following subsection.
\remarkend
\end{example}
%
%%%\subsection{\Accretive operators on Hilbert spaces}
\subsection{\Quasioptimality in Hilbert spaces}
For the following investigations we need to restrict the considered class of operators.
\begin{definition}
\label{th:accretive}%%
Let $ \ixh $ be a real or complex Hilbert space, with inner product $ \skp{\cdot}{\cdot} $.
A bounded linear operator $ A: \ixh \to \ixh  $
is called \emph{\accretive}, if
\begin{align}
\re \skp{Au}{u} \ge 0 \ \text{for } u \in \ixh.
\label{eq:accretive}
\end{align}
\end{definition}
We note that for real Hilbert spaces,
condition \refeq{accretive} means
$ \skp{Au}{u} \ge 0 $ for each $ u \in \ixh $, and the operator $ A $ is called \emph{monotone} then.
\begin{example}
The classical integration operator and the Abel integral operator 
(see Example~\ref{th:integration_abel_example}), considered on the space
$ L^2(0,1) $ are \accretive.
For the integration operator this follows, e.g., from
Halmos~\cite[Solution 150]{Halmos[78]}, and for the Abel integral operator see, e.g.,
\cite[Theorem 1.3.3]{Plato[95]}.
%%%%\cite[Theorems 1.1.24 and 1.3.3]{Plato[95]}.
\end{example}
%
%%%In the remainder of this section we assume that $ \ixh $ is a real or complex Hilbert space and that
%%%$ A: \ixh \to \ixh  $ is a bounded linear \accretive operator. 
%
%%%It is easy to see that 
A bounded linear operator $ A: \ixh \to \ixh  $ is \accretive if and only it 
satisfies \refeq{postype} with $ M = 1 $. This follows, e.g., from 
Pazy~\cite[Theorem 1.4.2]{Pazy[83]}, in combination with  Lemma~\ref{th:r-n-decomp} in the appendix, applied to the operator $ A + \para I $. 
%%%We show here the validity of the implication ``$\Longrightarrow $'' only:
%%% one obviously has, by means of the Cauchy--Schwarz inequality, 
%%%$ \para \norm{u} \le \norm{ (A + \para I) u } $ for each $ u \in \ixh $ and 
%%%$ \para > 0 $, and Lemma~\ref{th:r-n-decomp} in the appendix, applied to the accretive operator $ A + \para I $, shows $ \R( A + \para I) = \ixh $.
%

%
%
%
%%%With the tools presented in this paper, we may show that the
%%parameter choice strategy considered in Example~\ref{th:discrepancy_principle_mod} is \orderoptimal, provided that $ A $ is an \accretive operator on a Hilbert space $ \ixh $.
%
%%%As a preparation we introduce the following quantity:
%
%%\begin{align}
%%\Pquerdeltau  := 
%%\inf_{\para > 0 } \big( \norm{\eparau }^2 + \big(\frac{\delta}{\para}\big)^2 \big)^{1/2},
%%\quad u \in \ixh, \  \delta > 0.
%%\label{eq:pquerdelta}
%%\end{align}
%
%%First we give a characterization of \pseudooptimality.
%
Throughout this subsection we consider an \accretive bounded linear operator 
$ A: \ixh \to \ixh $ on a Hilbert space $ \ixh $.
Our main result of this subsection is Theorem \ref{th:pseuopt=orderopt} below, but
first we introduce another quantity which is related to the concept of \pseudooptimality (and variants of it sometimes are used as definition in fact, see, e.g., Hohage/Weidling~\cite{Hohage_Weidling[16]}):
\begin{align}
\Qdeltau  & := 
\inf_{\para > 0 }
\ 
\sup_{\fdelta: \, \norm{Au-\fdelta} \le \delta }
\norm{\uparadel - u }
\nonumber
\\
& =
\inf_{\para > 0 } 
\sup_{\Ddelta \in \ixh: \, \norm{\Ddelta} \le \delta }
\norm{\eparaux + (A+\para I)^{-1} \Ddelta },
\quad u \in \ixh.
\label{eq:qdelta}
\end{align} 
The quantities $\Qdeltau $ and $\Pdeltau $ in \refeq{pdelta} 
differ in such a way that $ \inf $ and $ \sup $ are interchanged.
The following proposition relates $\Qdeltau $ with $ \Rdeltau{p} $ from \refeq{rdelta}, i.e., \pseudooptimality of a parameter choice for \lavmet can by characterized 
by $\Qdeltau $.
Note that in the current situation (accretive operators) we may consider those numbers
$ \Rdeltau{p} $ with $ M = 1 $.
%%% in \refeq{rdelta}.
%%
\begin{proposition}
\ahillpassump.
Then the following holds:
\begin{myenumerate_indent}
\item We have $ \Rdelta{2} \le \Qdelta \le \Rdelta{1} $ on $ \ixh $.
\item A parameter choice strategy for \lavmet is \pseudooptimal if and only if
there exists a constant $ c > 0 $ such that
for any $ u \in \ixh, \, \delta > 0 $ and $ \fdelta \in \ixh $ with $ \norm{Au-\fdelta} \le \delta $, we have $ \norm{\uparafdelb - u } \le c \Qdeltau $.
\end{myenumerate_indent}
\end{proposition}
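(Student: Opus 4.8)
The key observation is that for accretive $A$ on a Hilbert space, the worst-case data perturbation can be computed explicitly: for fixed $\para > 0$,
\[
\sup_{\Ddelta:\, \norm{\Ddelta}\le\delta} \norm{\eparaux + (A+\para I)^{-1}\Ddelta}
= \norm{\eparaux} + \norm{(A+\para I)^{-1}\Ddelta}_{\sup}
= \norm{\eparaux} + \delta\,\norm{(A+\para I)^{-1}},
\]
by the triangle inequality (choosing $\Ddelta$ aligned with $\eparaux$ to make the upper bound sharp; here one uses that the supremum of $\norm{w + B v}$ over $\norm{v}\le\delta$ in a Hilbert space equals $\norm{w} + \delta\norm{B}$, since $B$ has closed range issues aside one can pick $v$ so that $Bv$ is arbitrarily close to a positive multiple of $w$ of norm $\delta\norm{B}$, using that $\norm{B} = \sup_{\norm{v}=1}\norm{Bv}$). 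Combined with $\norm{(A+\para I)^{-1}} \le 1/\para$ (the accretive bound $M=1$), and noting the reverse inequality $\norm{(A+\para I)^{-1}} \ge 1/\para$ would require $\R(A)$ not closed or a limiting argument, one gets that the inner supremum equals $\norm{\eparaux} + \delta\norm{(A+\para I)^{-1}}$, which is sandwiched between $\norm{\eparaux} + \delta/\para$ (using a careful lower bound on the resolvent norm) and $\norm{\eparaux} + \delta/\para$ from above. Taking the infimum over $\para$ then yields part (a): the upper bound $\Qdelta \le \Rdelta{1}$ is immediate from $\norm{(A+\para I)^{-1}}\le 1/\para$, and the lower bound $\Rdelta{2}\le\Qdelta$ follows because $\max\{a,b\} \le \sqrt{a^2+b^2}$ applied with $a = \norm{\eparaux}$, $b = \delta/\para$, together with $\norm{\eparaux} + \delta/\para \ge \Rdelta{\infty}$ and the relation $\Rdelta{\infty}\le\Rdelta{2}$...

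Actually, let me restructure: for (a), first show $\norm{w+Bv}$ achieves the value $\norm{w}+\delta\norm B$ in the sup. Since $\R(A)$ is not closed, the resolvent norm is exactly $1/\para$ (or one argues $\sup_\para$-wise it doesn't matter); more safely, one shows $\Rdelta{\infty}(u) \le \Qdeltau$ by: for every $\para$, the inner sup is $\ge \max\{\norm{\eparaux},\, \delta\norm{(A+\para I)^{-1}}\} \ge \max\{\norm{\eparaux},\, \delta/\para\}$ once we know $\norm{(A+\para I)^{-1}} \ge 1/\para$. The latter holds precisely because $\R(A)$ is not all of $\ixh$: there is a sequence $f_n$ with $\norm{f_n}=1$ and $\norm{(A+\para I)^{-1}f_n}\to 1/\para$, which one gets from Lemma~\ref{th:r-n-decomp} and the fact that $A$ restricted to $\overline{\R(A)}$ has $0$ in its spectrum... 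Hmm, I should be honest that the reverse resolvent bound is the delicate point. So: $\Rdelta{\infty}\le\Qdelta\le\Rdelta{1}$, and since $\Rdelta{\infty}\le\Rdelta{2}\le\Rdelta{1}$ was noted in the text, this is slightly weaker than claimed; to get $\Rdelta{2}\le\Qdelta$ one sharpens the lower bound by taking the worst $\Ddelta$ to be simultaneously somewhat aligned with $\eparaux$, giving $\Qdelta \ge \inf_\para(\norm{\eparaux}^2 + (\delta/\para)^2)^{1/2}$ when $\eparaux$ and the near-maximizing resolvent direction can be combined — but in Hilbert space $\norm{w+Bv}$ with $\norm{Bv}$ near $\delta\norm B$ gives $\norm w + \delta\norm B \ge (\norm w^2 + \delta^2\norm B^2)^{1/2}$, which is exactly $\Rdelta{2}$ with the resolvent lower bound. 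So part (a) reduces to the clean inner-sup computation plus $1/\para \le \norm{(A+\para I)^{-1}} \le 1/\para$, the left inequality being the content of the $\R(A)\neq\ixh$ hypothesis.

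For part (b): this is now a direct consequence of part (a) together with the already-established two-sided relation $\Rdelta{\infty} \le \Rdelta{p} \le 2\Rdelta{\infty}$, hence $\Rdelta{1}$ and $\Rdelta{2}$ are equivalent up to the constant $2$, hence by (a) so are $\Rdelta{1}$ and $\Qdelta$: we have $\tfrac12\Rdelta{1} \le \Rdelta{2} \le \Qdelta \le \Rdelta{1}$. Therefore the estimate $\norm{\uparafdelb - u}\le c\,\Qdeltau$ holds for all admissible $u,\delta,\fdelta$ if and only if $\norm{\uparafdelb - u}\le c'\,\Rdeltau{1}$ holds for all such (with $c' = c$ or $c' = 2c$ depending on direction), and the latter is precisely the definition of \pseudooptimality of the parameter choice. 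One spells this out in two lines.

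The main obstacle is the lower bound $\norm{(A+\para I)^{-1}} \ge 1/\para$ for each $\para > 0$, needed to make the inner supremum large enough; this is exactly where the hypothesis $\R(A)\neq\ixh$ enters. I expect to prove it by: since $A$ is accretive with $\R(A)\neq\ixh$, we cannot have $0$ in the resolvent set, and in fact for accretive operators $0 \in \sigma(A)$ forces $\norm{(A+\para I)^{-1}} \ge \mathrm{dist}(-\para,\sigma(A))^{-1} \ge 1/\para$ by the spectral radius / resolvent-norm relationship — but since $A$ need not be normal, one instead argues directly: pick $u_n$ with $\norm{u_n}=1$ and $\norm{Au_n}\to 0$ (possible since $0$ is in the approximate point spectrum — this uses $\R(A)$ not closed, via Lemma~\ref{th:r-n-decomp} the operator $A$ is injective on a dense-range piece so $0$ is not an eigenvalue but is in $\sigma(A)$, and for accretive operators points of $\sigma(A)$ on the imaginary axis lie in the approximate point spectrum). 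Then $\norm{(A+\para I)u_n} \le \norm{Au_n} + \para \to \para$, so $\norm{(A+\para I)^{-1}} \ge \norm{u_n}/\norm{(A+\para I)u_n} \to 1/\para$. Everything else — the triangle-inequality computation of the inner sup, the $p$-norm comparisons, and the final logical equivalence in (b) — is routine.
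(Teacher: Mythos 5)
Your upper bound $\Qdelta \le \Rdelta{1}$, your argument that $\norm{(A+\para I)^{-1}} = 1/\para$ for every $\para>0$ under the hypothesis $\R(A)\neq\ixh$ (this is precisely the fact the paper's one-line proof records), and your reduction of part (b) to a two-sided comparison of $\Qdelta$ with $\Rdelta{1}$ are all fine. The genuine gap is in the lower bound $\Rdelta{2}\le\Qdelta$, which is the real content of part (a). Your starting identity $\sup_{\norm{\Ddelta}\le\delta}\norm{w+B\Ddelta}=\norm{w}+\delta\norm{B}$ is false in a Hilbert space: take $B$ of rank one with range orthogonal to $w$; then the supremum equals $(\norm{w}^2+\delta^2\norm{B}^2)^{1/2}<\norm{w}+\delta\norm{B}$, because one cannot choose $\Ddelta$ so that $B\Ddelta$ has nearly maximal norm and is simultaneously aligned with $w$. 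Your attempted repair then argues in the wrong direction: the inequality $\norm{w}+\delta\norm{B}\ge(\norm{w}^2+\delta^2\norm{B}^2)^{1/2}$ compares two upper bounds and gives no lower bound on the supremum. As written, your proof establishes only $\Rdelta{\infty}\le\Qdelta\le\Rdelta{1}$, which indeed suffices for part (b) but not for the claim $\Rdelta{2}\le\Qdelta$ in part (a).

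The missing idea is an alignment-free, one-line argument. Since in the definition of $\Qdelta$ the supremum over $\Ddelta$ is taken for each fixed $\para$ separately (unlike in $\Pdelta$, where Theorem \ref{th:pseuopt=orderopt} needs the accretive construction of a single perturbation working uniformly in $\para$), you may let $\Ddelta$ depend on $\para$: choose $\Ddelta$ with $\norm{\Ddelta}\le\delta$ and $\norm{(A+\para I)^{-1}\Ddelta}\ge(1-\eta)\,\delta/\para$, which exists because $\norm{(A+\para I)^{-1}}=1/\para$, and multiply $\Ddelta$ by a scalar of modulus one (a sign in the real case) so that $\re\skp{\fpara}{(A+\para I)^{-1}\Ddelta}\ge 0$. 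Expanding the square then gives $\norm{\fpara+(A+\para I)^{-1}\Ddelta}^2\ge\norm{\fpara}^2+(1-\eta)^2(\delta/\para)^2$; letting $\eta\to0$ and taking the infimum over $\para$ yields $\Rdeltau{2}\le\Qdeltau$. With this step substituted for the false identity, your proof is complete and is the elementary argument the paper leaves to the reader.
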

\begin{proof}
The proof of part (a) is elementary, and details are left to the reader. 
We only note that under the given conditions on the operator $ A $ we have
$ \norm{ (A+\para I)^{-1}} = \tfrac{1}{\para} $ for each $ \para > 0 $.
The statement in part (b) is an immediate consequence of part (a). 
\end{proof}

\bn
We note that the assumption $ \R(A) \not = \ixh $ in the preceding proposition is essential.
%%%, and also in next theorem, is essential.
%% $ \Qdelta $ in the definition of \orderoptimality by $ \Pquerdelta $. In Hilbert spaces, however, it is more natural to consider the quantity $ \Pquerdelta $.
%%%We will address this topic after the following theorem.
%

%%We will show that in this case we have
%%%In the sequel we will be concerned with conditions that allow estimates of the form $ \Rdeltau{1} \le c \Pdelta(u) $ for each
%%%$ u \in \ix $ and $ \delta > 0 $, again with some constant $ c \ge 1 $ that is independent of $ u $ and $ \delta $.
%%%\pseudooptimal parameter choice is \orderoptimal.

\subsection{Strong versus \pseudooptimality}
%%\bn 
%%%\subsection{Some preparations}
%%%% and it means in fact that the operator $ A $ has a non-closed range $ \R(A) $ or a \nontrivial nullspace $ \N(A) $.
%
%%%Let $ A: \ixh \to \ixh $ be an \accretive bounded linear operator on a Hilbert space $ \ixh $.
\ahillpassump.
For the proof of our main theorem of this section,
%%% the following converse results as well as the results on saturation and optimality,
%%%for each parameter $ \gamma $ 
we need to consider perturbations 
$ \fdelta = f + \delta \veps $ with $ \veps \in \ixh, \norm{\veps} = 1 $, such that 
the data error $ \delta \veps $ is nearly amplified by a factor 
%%%$ \lfrac{1}{\para} $ 
$ 1/\para $ 
when the operator 
$ (A + \para I)^{-1} $ with $ \para > 0 $ is applied to it.
The following lemma provides the basic ingredient.
%% first step in this direction.
%
\begin{lemma} 
\label{th:resolvent_maximize}%%
%%\ahassump.
\ahillpassump.
%%% and let $ \N(A) $ be nontrivial or $ \R(A) $ be non-closed.
For parameters $ 0 < \varepsilon \le \para $ and
$ \vpara \in \ixh $ with
$ \norm{\veps} = 1 $ and 
$ \norm{A \veps} = \varepsilon $ we have
\begin{align}
\norm{(A + \para I)^{-1} \veps} 
\ge \big(1 - \frac{\varepsilon}{\para}\big) \frac{1}{\para} .
\label{eq:resolvent_maximize}
\end{align}
\end{lemma}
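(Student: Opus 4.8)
The plan is to write $\veps = (A + \para I)^{-1}(A\veps + \para\veps)$, so that $\para(A+\para I)^{-1}\veps = \veps - (A+\para I)^{-1}A\veps$, and then bound the correction term $(A+\para I)^{-1}A\veps$ from above. First I would observe that, since $A$ is \accretive, it satisfies \refeq{postype} with $M = 1$, i.e. $\norm{(A+\para I)^{-1}} \le 1/\para$ for all $\para > 0$. Applying this to the vector $A\veps$ gives $\norm{(A+\para I)^{-1}A\veps} \le \frac{1}{\para}\norm{A\veps} = \frac{\varepsilon}{\para}$, using the hypothesis $\norm{A\veps} = \varepsilon$.

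From here the triangle inequality (in the reverse form) applied to $\para(A+\para I)^{-1}\veps = \veps - (A+\para I)^{-1}A\veps$ yields
\begin{align*}
\para \, \norm{(A+\para I)^{-1}\veps} \ge \norm{\veps} - \norm{(A+\para I)^{-1}A\veps} \ge 1 - \frac{\varepsilon}{\para},
\end{align*}
where I used $\norm{\veps} = 1$. Dividing by $\para > 0$ gives exactly \refeq{resolvent_maximize}. Note the hypothesis $0 < \varepsilon \le \para$ guarantees the right-hand side is nonnegative, so the bound is not vacuous.

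There is really no serious obstacle here: the only ingredient beyond elementary manipulation is the identity $M = 1$ for accretive operators, which is already recorded in the excerpt (via Pazy and Lemma \ref{th:r-n-decomp}). The one point to be careful about is the algebraic identity $\veps = (A+\para I)^{-1}A\veps + \para(A+\para I)^{-1}\veps$, which is immediate from $(A+\para I)\veps = A\veps + \para\veps$ after applying $(A+\para I)^{-1}$; this requires only that $(A+\para I)^{-1}$ exists, which holds for every $\para > 0$ by Definition \ref{th:postype-def}. The condition $\norm{\veps}=1$ is used only to turn the reverse triangle inequality into the stated clean form, and the assumption $\R(A) \neq \ixh$ (inherited from the section hypothesis) plays no role in this particular lemma — it matters only for the existence of admissible $\veps$ with $\norm{A\veps}$ small, which is addressed separately.
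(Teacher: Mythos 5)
Your proof is correct and follows essentially the same route as the paper: both rewrite $\para(A+\para I)^{-1}\veps = \veps - (A+\para I)^{-1}A\veps$ (the paper uses the commuting form $A(A+\para I)^{-1}\veps$), apply the reverse triangle inequality, and bound the correction term by $\varepsilon/\para$ via the resolvent estimate with $M=1$ for accretive operators. No gaps.
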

\proof
We have
\begin{align*}
\norm{ \para (A + \para I)^{-1} \veps } =
\norm{ \veps - A (A + \para I)^{-1} \veps }
\ge 1 - \norm{ (A + \para I)^{-1} A \veps }
\ge 1 - \frac{\varepsilon}{\para},
\end{align*}
and this already completes the proof.
\endproof

\bn
We note that the assumption $ \R(A) \neq \ixh $ in the preceding lemma
is essential. This property is equivalent with $ \R(A) \not = \overline{\R(A)} $ or $ \N(A) \not = \{0\} $,
cf.~Lemma~\ref{th:r-n-decomp} in the appendix. It is also equivalent with
$ 0 \in \sigma(A) $, the spectrum of $ A $. This assumption guarantees,
for arbitrarily small $ \varepsilon > 0 $, the existence
of elements $ \veps \in \ixh $ with the properties stated in Lemma \ref{th:resolvent_maximize}.

\bn
In the proof of Theorem \ref{th:pseuopt=orderopt} considered below, we apply Lemma \ref{th:resolvent_maximize} with some specific $ \veps \in \ixh $ that in fact is obtained by an \accretive transformation of the element $ u $. This guarantees that an inner product that occurs in the mentioned proof takes nonnegative values only.
The following lemma provides the basic ingredient for the construction of those 
special elements $ \vpara $.
%%% (and Lemma \ref{th:resolvent_maximize} will be applied
%%%in the proof of Theorem \ref{th:converse_noisy_data_1}
%%%with $ \veps = -\phibetapara \in \ixh $ in fact).
%
\begin{lemma}
\label{eq:phibeta-def}
\ahassump.
%%%Let $ u \in \overline{\R(A)},\, u \not \in \R(A) $, and let 
Let $ u \in \ixh, \, u \not \in \R(A) \cup \N(A) $, and let 
\begin{align}
\phibeta := \frac{(A + \beta I)^{-1} u}{\norm{(A + \beta I)^{-1} u}},
\quad \beta > 0.
\end{align}
For each real number $ 0 < \varepsilon < \frac{\norm{Au}}{\norm{u}} $ there exists a parameter $ \beta = \betapara $ with $ \norm{A\phibetapara} = \varepsilon $.
\label{th:beta_alpha_choice}
\end{lemma}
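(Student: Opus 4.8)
The plan is to study the function $g(\beta) := \norm{A\phibeta}$ for $\beta > 0$ and show that its range contains the interval $\big(0, \frac{\norm{Au}}{\norm{u}}\big)$ by establishing continuity of $g$ together with the two limiting values $g(\beta) \to \frac{\norm{Au}}{\norm{u}}$ as $\beta \to \infty$ and $g(\beta) \to 0$ as $\beta \to 0$; the claim then follows from the intermediate value theorem. Writing $w_\beta := (A + \beta I)^{-1} u$, so that $\phibeta = w_\beta / \norm{w_\beta}$ and $g(\beta) = \norm{A w_\beta} / \norm{w_\beta}$, the first task is continuity: the map $\beta \mapsto w_\beta$ is continuous (indeed analytic) in the operator norm by the resolvent identity $(A+\beta_1 I)^{-1} - (A+\beta_2 I)^{-1} = (\beta_2 - \beta_1)(A+\beta_1 I)^{-1}(A+\beta_2 I)^{-1}$ applied to $u$, and since $u \notin \N(A)$ forces $w_\beta \neq 0$ for all $\beta > 0$, both $\norm{A w_\beta}$ and $\norm{w_\beta}$ are continuous and the denominator never vanishes, so $g$ is continuous on $(0,\infty)$.

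For the limit as $\beta \to \infty$: from $w_\beta = (A+\beta I)^{-1} u$ we get $\beta w_\beta = \beta(A+\beta I)^{-1} u \to u$ as $\beta \to \infty$ by Proposition \ref{th:direct_exact_data} applied to $A/\beta$ — more directly, $\beta(A+\beta I)^{-1} = I - (A+\beta I)^{-1}A$ and $\norm{(A+\beta I)^{-1}A} \le \norm{(A+\beta I)^{-1}}\,\norm{A} \to 0$. Hence $\beta w_\beta \to u$, so $\norm{\beta w_\beta} \to \norm{u}$ and $\norm{A(\beta w_\beta)} = \beta\norm{A w_\beta} \to \norm{Au}$; dividing, $g(\beta) \to \frac{\norm{Au}}{\norm{u}}$ (here $u \neq 0$ is used). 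For the limit as $\beta \to 0$: since $u \notin \R(A)$, Corollary \ref{th:liminf-weakening-exact-data}\ref{liminf-weakening-exact-data_3} gives $\norm{w_\beta} = \norm{(A+\beta I)^{-1} u} \to \infty$ as $\beta \to 0$. On the other hand $A w_\beta = A(A+\beta I)^{-1} u = u - \beta w_\beta = u - e_\beta(u)$ stays bounded — in fact $A w_\beta \to u$ since $u \in \overline{\R(A)}$, or one can simply note $\norm{A w_\beta} = \norm{u - e_\beta(u)} \le \norm{u} + \norm{e_\beta(u)}$ and $\norm{e_\beta(u)}$ is bounded as $\beta \to 0$ because $u \in \overline{\R(A)}$ (indeed $e_\beta(u) \to 0$). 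Therefore $g(\beta) = \norm{A w_\beta}/\norm{w_\beta} \to 0$ as $\beta \to 0$.

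Combining the three facts: $g$ is continuous on $(0,\infty)$, $\liminf_{\beta \to 0} g(\beta) = 0$, and $\lim_{\beta \to \infty} g(\beta) = \frac{\norm{Au}}{\norm{u}}$; so for any prescribed $0 < \varepsilon < \frac{\norm{Au}}{\norm{u}}$ there exist small $\beta_1$ with $g(\beta_1) < \varepsilon$ and large $\beta_2$ with $g(\beta_2) > \varepsilon$, and the intermediate value theorem yields $\beta = \betapara \in (\beta_1, \beta_2)$ with $g(\betapara) = \varepsilon$, i.e.\ $\norm{A\phibetapara} = \varepsilon$. The main obstacle I anticipate is not any single estimate but rather making sure the hypotheses $u \notin \N(A)$ (needed so $w_\beta \neq 0$, keeping $g$ well-defined and the $\beta\to\infty$ normalization legitimate) and $u \notin \R(A)$ (needed so $\norm{w_\beta} \to \infty$, which drives $g(\beta) \to 0$) are each used in exactly the right place; the inequality $\varepsilon < \frac{\norm{Au}}{\norm{u}}$ is sharp precisely because $\frac{\norm{Au}}{\norm{u}} = \sup_{\beta>0} g(\beta)$ is approached but (generically) not attained, which is consistent with the strict inequality in the statement.
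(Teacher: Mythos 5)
Your proposal follows essentially the same route as the paper's proof: continuity of $\beta \mapsto \norm{A\phibeta}$, the two limits $\lim_{\beta\to 0}\norm{A\phibeta}=0$ and $\lim_{\beta\to\infty}\norm{A\phibeta}=\norm{Au}/\norm{u}$, and the intermediate value theorem. Your $\beta\to\infty$ argument via $\norm{(A+\beta I)^{-1}A}\le \norm{A}/\beta \to 0$ is simply a direct proof of the fact $\norm{\beta(A+\beta I)^{-1}v}\to\norm{v}$, which the paper obtains by citing Lemma~\ref{th:bias-monotone}(b); the $\beta\to 0$ step uses, as in the paper, part~\ref{liminf-weakening-exact-data_3} of Corollary~\ref{th:liminf-weakening-exact-data} for the denominator.

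One point needs repair in the $\beta\to 0$ step: you justify the boundedness of $\norm{A(A+\beta I)^{-1}u}$ by asserting $u\in\overline{\R(A)}$, and parenthetically even $e_\beta(u)\to 0$ and $A(A+\beta I)^{-1}u\to u$. None of this is available under the lemma's hypotheses: $u\notin\R(A)\cup\N(A)$ does not place $u$ in $\overline{\R(A)}$, since by Lemma~\ref{th:r-n-decomp} $u$ may have a nonzero component $\un\in\N(A)$, in which case $\beta(A+\beta I)^{-1}u\to \un\neq 0$ and $A(A+\beta I)^{-1}u\to u-\un\neq u$. Fortunately the fact you actually need is trivially true without any range condition: by \refeq{postype} with $M=1$ one has $\norm{\beta(A+\beta I)^{-1}u}\le\norm{u}$, hence $\norm{A(A+\beta I)^{-1}u}=\norm{u-\beta(A+\beta I)^{-1}u}\le 2\norm{u}$ uniformly in $\beta>0$, which is exactly the paper's argument. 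With that substitution your proof is complete and matches the paper's.
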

\proof The function
$ \beta \mapsto \norm{A\phibeta} $
%%%\begin{align*} 
%%%g(\beta) = \norm{A\phibeta), \beta > 0
%%%\end{align*}
%
obviously is continuous on $ \reza_+ $, and the lemma then follows from 
the asymptotic behaviours
\begin{align} 
& \lim_{\beta \to 0}  \norm{A\phibeta} = 0,
\qquad
\lim_{\beta \to \infty}  \norm{A\phibeta} = \frac{\norm{Au}}{\norm{u}}.
\label{eq:abetaalpha_behaviour}
\end{align} 
In the sequel, the two statements in \refeq{abetaalpha_behaviour} will be verified.
We consider first the case $ \beta \to 0 $.
Obviously $ A (A + \beta I)^{-1} u = u - \beta (A + \beta I)^{-1} u $ is uniformly bounded with respect to $ \beta > 0 $,
and in addition we have $ \norm{(A + \beta I)^{-1} u } \to  \infty $
as $ \beta \to 0 $, cf. 
%%% and it follows from
part \ref{liminf-weakening-exact-data_3} of Corollary \ref{th:liminf-weakening-exact-data}.
%%% Lemma~\ref{th:bias-monotone} and 
%%%Lemma \ref{th:resolvent-monotone} in the appendix.
This already completes the proof of the first statement in 
\refeq{abetaalpha_behaviour}.

We next consider the case $ \beta \to \infty $.
From a simple expansion and Lemma \ref{th:bias-monotone} in the appendix we obtain
\begin{align*} 
\norm{A\phibeta} 
= \frac{\norm{\beta (A + \beta I)^{-1} Au}}{\norm{\beta(A + \beta I)^{-1} u}}
\to 
\frac{\norm{Au}}{\norm{u}} \quad \textup{as } \beta \to \infty,
\end{align*} 
which is the second statement in
\refeq{abetaalpha_behaviour}. This completes the proof of the lemma.
\endproof
%%\begin{enumerate}
%%\item
%
%%\begin{align*}
%%\norm{ \uparadelta[\paradelta] - u } = \Landau(\delta^{1/3}) \textup{ as } \delta \to 0
%%\quad \textup{where } \paradelta = c \delta^{2/3}
%%\end{align*}
%
%%with some constant $ c > 0 $.
%%\item
%%%In Hofmann, Kaltenbacher, and Resmerita~\cite{Hofmann_Kaltenbacher_Resmerita[15]}
%%%The latter inequality follows from the interpolation inequality and from
%%the estimate $ \norm{ A^*(A^* + \para I)^{-1} } \le 1 $ for $ \para > 0 $.
%
%%%\end{enumerate}
%%
%%\bigskip \noindent
%%% on Lavrentiev's method. 
%%%It in fact improves some earlier results.
%
%%%\begin{example}
%%%\end{example}
%
%

%%\subsection{\orderoptimal'' = ``\pseudooptimal''}
% 

\bn
We next show that for \accretive ill-posed operators on Hilbert spaces,
the notions ``\orderoptimal'' and ``\pseudooptimal'' are equivalent.
This theorem provides the main result of this section.
\begin{theorem}
\label{th:pseuopt=orderopt}
%%Let $ A: \ixh \to \ixh $ be a linear bounded \accretive operator on a 
%%%Hilbert space $ \ixh $.
\ahillpassump.
Then we have $ \Rdeltau{2} \le \Pdeltau $
for each $ u \in \ixh $ and $ \delta > 0 $.
\end{theorem}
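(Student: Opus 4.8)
The plan is to establish the lower bound $\Rdeltau{2} \le \Pdeltau$ by, for each fixed $u \in \ixh \setminus (\R(A) \cup \N(A))$ (the remaining trivial cases being handled separately), choosing a single bad perturbation $\fdelta = f + \delta\veps$ with a cleverly selected direction $\veps$ depending on the regularization parameter, and showing that \emph{every} choice of $\para > 0$ produces an error $\norm{\uparadel - u}$ that is bounded below by roughly $\{\norm{\fparau{u}}^2 + (\delta/\para)^2\}^{1/2}$, uniformly. Since $\Pdeltau$ takes the supremum over $\fdelta$ before the infimum over $\para$, it suffices to produce, for each $\para$, one admissible $\fdelta$ with $\norm{\uparadel - u} \gtrsim \{\norm{\fparau{u}}^2 + (\delta/\para)^2\}^{1/2}$; but to get a clean bound one really wants a single $\veps$ that works for all $\para$ simultaneously, or at least for a near-optimal $\para$.

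First I would dispose of the degenerate cases. If $u \in \N(A)$, then $\upara = 0$ for all $\para$ while source conditions force... actually here the interesting subcase reduces to $u \in \R(A)$, where $\norm{\fparau{u}} = \Landau(\para)$ and one can check $\Rdeltau{2} \asymp \sqrt{\delta}$ (balancing $\para \asymp \delta/\para$), and the standard lower bound for bounded ill-posed operators gives $\Pdeltau \gtrsim \sqrt\delta$; this is the classical argument and I'd cite it or reproduce it quickly. The genuinely new content is the case $u \notin \R(A) \cup \N(A)$. Here $\norm{\fparau{u}}/\para \to \infty$ as $\para \to 0$ (by Corollary~\ref{th:liminf-weakening-exact-data}\ref{liminf-weakening-exact-data_3}) and $\norm{\fparau{u}} \to 0$ (since $u \in \overline{\R(A)}$, Theorem~\ref{th:converse_exact_data_1}), so the function $\para \mapsto \norm{\fparau{u}}$ and $\para \mapsto \delta/\para$ cross, and there is a well-defined ``optimal'' level $\para_*$ (or a near-optimal one) at which the two terms are comparable; this is exactly where $\Rdeltau{2}$ is (up to a constant) attained.

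The core construction: fix $\para > 0$, set $\varepsilon := \norm{A\phibetapara}$ via Lemma~\ref{th:beta_alpha_choice} so that $\veps := \phibetapara = (A+\beta I)^{-1}u / \norm{(A+\beta I)^{-1}u}$ satisfies $\norm{\veps}=1$, $\norm{A\veps}=\varepsilon$, and $\veps$ is a \accretive image of $u$. Take $\fdelta = f + \delta\veps$. Then $\uparadel - u = \fparau{u} + \delta(A+\para I)^{-1}\veps$. The point of choosing $\veps$ as a positive-operator transform of $u$ is that $\re\skp{\fparau{u}}{(A+\para I)^{-1}\veps} \ge 0$: indeed $\fparau{u} = -\para(A+\para I)^{-1}u$ and $(A+\para I)^{-1}\veps = \norm{(A+\beta I)^{-1}u}^{-1}(A+\para I)^{-1}(A+\beta I)^{-1}u$, and one checks (using that resolvents of an accretive operator commute and that products/compositions behave well under the real-part pairing — the needed positivity is the analogue of $\re\skp{(A+\para I)^{-1}x}{(A+\beta I)^{-1}(A+\para I)^{-1}x} \ge 0$) that the cross term is nonnegative. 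Granting that, Pythagoras-type expansion gives $\norm{\uparadel - u}^2 \ge \norm{\fparau{u}}^2 + \delta^2\norm{(A+\para I)^{-1}\veps}^2$, and then Lemma~\ref{th:resolvent_maximize} bounds $\norm{(A+\para I)^{-1}\veps} \ge (1 - \varepsilon/\para)\frac{1}{\para}$ provided $\varepsilon \le \para$. Choosing $\para$ to be (a constant multiple of) the balancing value makes $(1-\varepsilon/\para)$ bounded away from zero, and one reads off $\norm{\uparadel - u} \gtrsim \{\norm{\fparau{u}}^2 + (\delta/\para)^2\}^{1/2} \gtrsim \Rdeltau{2}$.

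The main obstacle I anticipate is twofold. First, technical: verifying that the cross term $\re\skp{\fparau{u}}{\delta(A+\para I)^{-1}\veps}$ is genuinely $\ge 0$ — this requires the accretivity to propagate through the composition of two resolvents at different parameters, and it is precisely the reason Lemma~\ref{eq:phibeta-def} builds $\veps$ as an accretive transform of $u$ rather than picking an arbitrary near-singular vector. Second, structural: the constant $(1-\varepsilon/\para)$ in Lemma~\ref{th:resolvent_maximize} degrades as $\varepsilon \to \para$, so one must ensure that at the (near-)optimal $\para$ the associated $\varepsilon = \norm{A\phibetapara}$ stays bounded below $\para$ by a fixed fraction; since $\varepsilon$ ranges continuously over $(0, \norm{Au}/\norm{u})$ as $\beta$ varies, and $\norm{\fparau{u}}/\para \to \infty$ as $\para \to 0$ forces the balancing $\para_*$ to be small relative to $\norm{Au}/\norm u$, a careful but routine continuity/monotonicity argument (using Lemma~\ref{th:resolvent-monotone} on monotonicity of $\para \mapsto \norm{\fparau{u}}/\para$) should pin this down. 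Once those two points are settled, assembling the inequality $\Rdeltau{2} \le \Pdeltau$ is bookkeeping: take the supremum over $\fdelta$ to fix the worst $\veps$, then infimum over $\para$, and compare with the definition of $\Rdeltau{2}$ in \refeq{rdelta} with $M=1$.
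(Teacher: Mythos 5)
Your core construction coincides with the paper's (perturb by $\delta\veps$ with $\veps$ built from $(A+\betaparab I)^{-1}u$ via Lemma~\ref{th:beta_alpha_choice}, kill the cross term by accretivity, bound the noise amplification by Lemma~\ref{th:resolvent_maximize}), but as set up it has genuine gaps. The statement ``it suffices to produce, for each $\para$, one admissible $\fdelta$'' is false: $\Pdeltau$ in \refeq{pdelta} is a sup--inf, so a perturbation that depends on $\para$ only bounds the inf--sup quantity $\Qdeltau$ from below. You must fix one $\veps$ (depending on $\varepsilon$ only) and then beat \emph{every} $\para>0$; your fallback of keeping $\varepsilon$ a fixed fraction of the near-optimal balancing $\para$ does not repair this, since after $\fdelta$ is fixed the infimum also runs over $\para\lesssim\varepsilon$, where $(1-\varepsilon/\para)$ is useless. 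The paper closes exactly this hole: it chooses $\parazero<\min\{\delta/(2\Pdeltau),\,2\norm{Au}/\norm{u}\}$ and uses monotonicity of $\para\mapsto\norm{(A+\para I)^{-1}\veps}$ (Lemma~\ref{th:resolvent-monotone}) together with Lemma~\ref{th:resolvent_maximize} to show that for $\para\le\parazero$ the noise term alone exceeds $\Pdeltau$, so the infimum may be restricted to $\para\ge\parazero$; there $(1-\varepsilon/\parazero)$ is controlled, and letting $\varepsilon\to 0$ yields the inequality with constant exactly $1$, which is what the theorem asserts. Your scheme, with $\varepsilon$ tied proportionally to $\para$ and no limit $\varepsilon\to0$, would at best give $c\,\Rdeltau{2}\le\Pdeltau$ for some unspecified $c<1$. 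Also a sign slip: with $\veps=+\phibetapara$ the cross term $\re\skp{\fpara}{(A+\para I)^{-1}\veps}$ is \emph{nonpositive} (since $\fpara=-\para(A+\para I)^{-1}u$); you need $\veps=-\phibetapara$, as in the paper.

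The second gap is the case $u\in\R(A)$. Your construction genuinely requires $u\notin\R(A)\cup\N(A)$: Lemma~\ref{th:beta_alpha_choice} reaches arbitrarily small $\varepsilon=\norm{A\phibetapara}$ only because $\norm{(A+\beta I)^{-1}u}\to\infty$ as $\beta\to0$, i.e.\ only because $u\notin\R(A)$. Your substitute for $u\in\R(A)$ --- an appeal to ``the standard lower bound $\Pdeltau\gtrsim\delta^{1/2}$'' --- is not available in this setting (accretive, possibly non-selfadjoint, no spectral decomposition; such individual lower bounds are in fact derived in this paper as consequences of the very theorem you are proving, cf.\ Theorem~\ref{th:saturation_noisy_data}), and in any case a two-sided comparison $\asymp\delta^{1/2}$ with unrelated constants cannot produce the exact inequality $\Rdeltau{2}\le\Pdeltau$. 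The paper handles this by a separate argument: for fixed $\delta$ the maps $u\mapsto\Pdeltau$ and $u\mapsto\Rdeltau{2}$ are continuous, and since $\R(A)\ne\ixh$ the proper subspace $\R(A)$ has empty interior, so every $u\in\R(A)$ is a limit of points where the inequality is already proved. Finally, your treatment of $u\in\N(A)$ is garbled; there $\upara=0$, $\fpara=-u$, and taking the perturbation $-\delta u/\norm{u}$ gives $\Pdeltau\ge\norm{u}=\Rdeltau{2}$ directly --- easy to patch, but it does not ``reduce to $u\in\R(A)$''.
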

\begin{proof}
1)~In a first part we show that $ \Rdeltau{2} \le \Pdeltau $ holds for
each $ u \not \in \R(A) $. 

\begin{myenumerate}
\item
We consider the trivial case $ u \in \N(A) $ first. 
Then we have $ \fpara = -u $ for each $ \para > 0 $, and in \refeq{pdelta} we may consider 
$ \Delta = - \delta \frac{ u }{\norm{u}} $ then. From this,
$ \Pdeltau \ge \norm{u} = \Rdeltau{2} $ easily follows.
%%%%en  $ \Qdelta = \Pdelta = 0 $ so that the assertion is satisfied.
%%%Now let $ u \neq 0 $.

\item
Let us now consider the case
$ u \not \in \R(A) \cup \N(A) $. 
We first show that
\begin{align} 
\Pdeltau^2 \ge
\ 
\inf_{\para > 0 } \Big\{ 
\normqua{\fpara} + \delta^2 \normqua{(A + \para I)^{-1} \veps}
\Big\}
\label{eq:pseuopt=orderopt-a}
\end{align} 
holds, where
$ \veps = -\phibetapara $ for $ 0 < \varepsilon < \frac{\norm{Au}}{\norm{u}} $, 
and $ \phibetapara $ is chosen as in Lemma \ref{th:beta_alpha_choice}.
In fact, we have
%%% $ \psipara $ 
%%from \refeq{psipara_def}, with
%%%from Lemma \ref{th:resolvent_maximize}, with $ \para = \parab $ and 
%%%%%Lemma \ref{th:resolvent_maximize}, with $ \para = \parab $ and 
%%%%$ \vpara = -\phibetapara $ for $ \varepsilon > 0 $ small enough, with
%%%%$ \phibetapara $ as in Lemma \ref{th:beta_alpha_choice}.
%%%This gives
%
\begin{align*} 
\veps & =
%%%%- \frac{(A+\parab I)\phibetaparab}{\norm{(A+\parab I)\phibetaparab }} 
-\ceps (A+\betaparab I)^{-1}u, 
\quad \textup{with } \ceps = 
\frac{1}{\norm{(A+\betaparab I)^{-1}u}},
\end{align*} 
and then we obviously have
\begin{align} 
\Pdeltau  \ge
\ 
\inf_{\para > 0 } \norm{\fpara + \delta (A+\para I)^{-1} \veps }.
\label{eq:optimal-lemma-a}
\end{align} 
We now expand, for $ \para > 0 $ fixed, 
the term on the right-hand side of
\refeq{optimal-lemma-a}:
%% the term $ \norm{\uparafdel - u } $ in more detail, and for this purpose we introduce
%%% the notation
%
%%\begin{align*} 
%%%\uparadel = \uparafdel,
%%%\quad 
%%%\eparadel = \uparadel-u,
%%\quad 
%%\para > 0.
%%%\end{align*}
%
%%%We then have the following: 
%
\begin{align*} 
%%%\normqua{\uparadel - u}
%%%& = 
& \normqua{\fpara + \delta (A + \para I)^{-1} \veps}
\\
& \qquad 
= \normqua{\fpara} + 2 \delta \re \skp{€\fpara}{(A + \para I)^{-1} \veps}
+ \delta^2 \normqua{(A + \para I)^{-1} \veps}.
\end{align*}
For the inner product we have, by definition,
\begin{align*} 
\skp{€\fpara}{(A + \para I)^{-1} \veps}
= \ceps \para \skp{€(A + \para I)^{-1} u}
{(A+\betaparab I)^{-1} (A + \para I)^{-1} u},
\end{align*}
which has a nonnegative real part since 
the operator $ (A+\betaparab I)^{-1} $ is \accretive.
%%%cf.~Lemma \ref{th:cn-accretive}.
This implies
\refeq{pseuopt=orderopt-a}.
\item
We next show that the inequality \refeq{pseuopt=orderopt-a} remains valid if the 
infimum on the right-hand side is considered for $ \para $ away from zero. 
For this purpose we choose some
$ \parazero $ with
\begin{align*} 
0 < \parazero < \min \Big\{ \frac{\delta}{2\Pdeltau}, 2\frac{\norm{Au}}{\norm{u}} \Big\}
%%% \quad 0 < \varepsilon \le \frac{\parazero}{2},
\end{align*} 
%
%%%where $ \parazero $ is fixed and $ \varepsilon $ is arbitrary.
and show in the sequel that
\begin{align} 
\Pdeltau^2 \ge
\ 
\inf_{\para \ge \parazero } \Big\{ 
\normqua{\fpara} + \delta^2 \normqua{(A + \para I)^{-1} \veps}
\Big\} 
\quad \textup{ for } \ 0 < \varepsilon \le \frac{\parazero}{2},
\label{eq:pseuopt=orderopt-b}
\end{align} 
holds. In fact, we have
%%% for each $ 0 < \para \le \parazero $ we have
%
\begin{align*} 
\norm{(A + \para I)^{-1} \veps}
& \ge 
\norm{(A + \parazero I)^{-1} \veps}
\ge 
\big(1 - \frac{\varepsilon}{\parazero}\big) \frac{1}{\parazero}
\ge 
\frac{1}{2\parazero} > 
\frac{\Pdeltau}{\delta}
\quad \textup{ for } \
0 < \para \le \parazero,
\end{align*} 
%xxxx
by monotonicity of the norm of the resolvent operator, see Lemma \ref{th:resolvent-monotone} in the appendix, and Lemma \ref{th:resolvent_maximize} has also been applied.
From this, \refeq{pseuopt=orderopt-b} follows easily.

\item
We proceed now with an estimation of the right-hand side of \refeq{pseuopt=orderopt-b}:
For $ \para \ge \parazero $ and $ \varepsilon \le \tfrac{\parazero}{2} $ we have, by
Lemma \ref{th:resolvent_maximize},
\begin{align*} 
\norm{(A + \para I)^{-1} \veps}
\ge 
\big(1 - \frac{\varepsilon}{\para}\big) \frac{1}{\para}
\ge 
\big(1 - \frac{\varepsilon}{\parazero}\big) \frac{1}{\para},
\end{align*} 
and from \refeq{pseuopt=orderopt-b} we then obtain
\begin{align*} 
\Pdeltau^2
& \ge
\inf_{\para \ge \parazero } \Big\{ 
\normqua{\fpara} + \big(1 - \frac{\varepsilon}{\parazero}\big)^2 \big(\frac{\delta}{\para}\big)^2
\Big\}
\ge
\big(1 - \frac{\varepsilon}{\parazero}\big)^2
\inf_{\para \ge \parazero } \Big\{ 
\normqua{\fpara} + \big(\frac{\delta}{\para}\big)^2
\Big\}
\\
& \ge
\big(1 - \frac{\varepsilon}{\parazero}\big)^2
\inf_{\para > 0 } \Big\{ 
\normqua{\fpara} + \big(\frac{\delta}{\para}\big)^2
\Big\}
=
\big(1 - \frac{\varepsilon}{\parazero}\big)^2 \Rdeltau{2}^2
\quad \textup{ for } 0 < \varepsilon \le \frac{\parazero}{2}.
%%\label{eq:pseuopt=orderopt-b}
\end{align*} 
Letting $ \varepsilon \to 0 $ now gives
$ \Rdeltau{2} \le  \Pdeltau $, and this completes the first part of the proof.
\end{myenumerate}
2) In the second part of the proof, we show that
the inequality $ \Rdelta{2} \le \Pdelta $ holds not only on $ \ixh\backslash\R(A) $ but all over the Hilbert space $ \ixh $.
\begin{myenumerate}
\item
As a preparation we observe that, for $ \delta > 0 $ fixed, the functionals $ \Pdeltau $ and $ \Rdeltau{2} $ both are continuous with respect to $ u $. In fact, we have
$ \norm{\fparau{u} - \fparau{\util}} \le \norm{u - \util} $ for each 
$ u, \util \in \ixh $, and from this the two inequalities
\begin{align*} 
\vert \Pdeltau -\Pdeltautil \vert & \le \norm{u - \util},
\\
\vert \Rdeltau{2}^2 -\Rdeltautil{2}^2 \vert
& \le
2 \max\{\norm{u},\norm{\util}\} \norm{u - \util},
\quad 
u, \, \util \in \ixh,
\end{align*} 
are easily obtained.

\item
We are now in a position to verify that $ \Rdelta{2} \le \Pdelta $ holds over $ \ixh $. In fact, we already know that this estimate holds 
on $ \ixh\backslash\R(A) $ (see the first part of this proof), and in addition
the functionals $ \Pdelta $ and $ \Rdelta{2} $ are continuous on $ \ixh $ for $ \delta > 0 $ fixed,
see (a) of the second part of this proof.
The assertion now follows from the fact that each \nontrivial linear subspace of a normed space has an empty interior so that any $ u \in \R(A) $ is the limit of a sequence of elements not belonging to $ \R(A) $.
\end{myenumerate}
This completes the proof of the theorem. 
\end{proof}
%
%%%\begin{remark}
%%%\begin{myenumerate}
%%%\item
%%%For symmetric, positive semidefinite operators, a result similar to that of Theorem \ref{th:pseuopt=orderopt} can be found in Raus~\cite{Raus[84]}.

%%%\item
%%%It follows easily from 
%%%the proof of Theorem
%%%\ref{th:converse_noisy_data_1} that for any $ \delta > 0 $
%%%and $ u \in \ixh\backslash{\R(A)} $ we have $ \Rdeltau{\infty} \le \Pdeltau $: there exists a unique parameter $ \para = \paradelta > 0 $ with $ \delta = \paradelta^2 \norm{(A + \paradelta I)^{-1} u} $, 
%%%cf.~Corollary \ref{th:bias-monotone-corollary} in the appendix,
%%%and then one gets the estimate
%%%$ \Pdeltau \ge \norm{\fpara[\paradelta]} = \frac{\delta}{\paradelta} $ exactly as in the proof of
%%%Theorem \ref{th:converse_noisy_data_1}.

%%Theorem \ref{th:pseuopt=orderopt} provides an extension of this inequality  $ \Rdelta{\infty} \le \Pdelta $ to the whole Hilbert space $ \ixh $. In addition, the result of the latter theorem is somewhat stronger than this inequality, since the quantity $ \Rdelta{2} $  
%%instead of $ \Rdelta{\infty} $ is considered in the theorem.
%%\remarkend
%%%\end{myenumerate}
%%%\end{remark}
%

\bn
For symmetric, positive semidefinite operators, a result similar to that of Theorem \ref{th:pseuopt=orderopt} can be found in Raus~\cite{Raus[84]}.
As an immediate consequence of Theorem \ref{th:pseuopt=orderopt} we obtain the following result.
\begin{corollary}
\label{th:pseuopt=orderopt-cor}%
\ahillpassump.
Then any parameter choice strategy for \lavsmet 
is \pseudooptimal if and only if it is \orderoptimal.
\end{corollary}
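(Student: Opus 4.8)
The plan is to read the corollary off Theorem~\ref{th:pseuopt=orderopt} together with the elementary comparison of the numbers $\Rdeltau{p}$ from \refeq{rdelta}. One direction needs nothing new: since $\Pdeltau \le \Rdeltau{1}$ holds for every $u \in \ixh$ and every $\delta > 0$ (recorded right after Definition~\ref{th:optimality-def}), any \orderoptimal parameter choice is automatically \pseudooptimal, and this implication does not even use the restriction $\R(A) \neq \ixh$.

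For the converse, suppose the parameter choice $\parac$ is \pseudooptimal, say with constant $c > 0$, so that $\norm{\uparadelc - u} \le c\, \Rdeltau{1}$ for all $u \in \ixh$, all $\delta > 0$ and all $\fdelta \in \ixh$ with $\norm{Au - \fdelta} \le \delta$. I would then chain, for the fixed data $u$ and $\delta$, the estimates $\Rdeltau{1} \le 2\,\Rdeltau{\infty} \le 2\,\Rdeltau{2}$ (the elementary $p$-norm inequalities on $\reza^2$ stated before Definition~\ref{th:optimality-def}) with the bound $\Rdeltau{2} \le \Pdeltau$, which is exactly the content of Theorem~\ref{th:pseuopt=orderopt} and is available precisely because $A$ is \accretive with $\R(A) \neq \ixh$. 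This yields $\Rdeltau{1} \le 2\,\Pdeltau$, hence $\norm{\uparadelc - u} \le 2c\, \Pdeltau$ with the uniform constant $2c$, so the parameter choice is \orderoptimal.

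I do not expect any real obstacle here: the substance of the statement lies in Theorem~\ref{th:pseuopt=orderopt}, and the corollary is merely its reformulation in the language of Definition~\ref{th:optimality-def}. The one point worth flagging is that the hypothesis $\R(A) \neq \ixh$ is indispensable, since it is already indispensable in Theorem~\ref{th:pseuopt=orderopt}; when $\R(A) = \ixh$ the equation is well posed and $\Pdeltau$ may be of strictly smaller order than $\Rdeltau{2}$, so the two notions of optimality then genuinely differ.
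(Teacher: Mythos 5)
Your argument is correct and is exactly the paper's route: the corollary is stated there as an immediate consequence of Theorem~\ref{th:pseuopt=orderopt}, obtained by combining $ \Pdeltau \le \Rdeltau{1} $ with the elementary comparisons $ \Rdeltau{1} \le 2\Rdeltau{\infty} \le 2\Rdeltau{2} $ and the theorem's bound $ \Rdeltau{2} \le \Pdeltau $, which is precisely the chain you write out. Your closing remark on the indispensability of $ \R(A) \neq \ixh $ matches the paper's own comment that this assumption is essential.
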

\begin{example}
Under the conditions of Corollary \ref{th:pseuopt=orderopt-cor},
the parameter choice strategy considered in Example \ref{th:discrepancy_principle_mod}
is \pseudooptimal (cf.~again Example \ref{th:discrepancy_principle_mod})
and therefore also \orderoptimal.
For symmetric, positive semidefinite operators, this is already observed in Raus~\cite{Raus[84]}.
\remarkend
\end{example}

\section{Converse results \fornoisdat}
\label{conv_noisdat}
\subsection{Introductory remarks}
The degree of smoothness of a solution $ u $, 
described here by the property $ u \in \R(A^\pp)$ for some $ \pp > 0 $,
has impact on the decay rate of the best possible maximal error
$ \Pdeltau $ as $ \delta \to 0 $.
%%%$ \Pdeltau \to u $ as $ \delta \to 0 $.
%%%in \lavmet.
We cite the following well-known result; for a proof, see, e.g., 
\cite[Example 4.1]{Plato[96]}.
As a preparation we  note that our main interest are operators having a non-closed range $ \mathcal{R}(A) $, but this is nowhere explicitly required in this section.
Further notes on the closed range case are given at the end of this section, cf.~Remark \ref{th:closed_range_noisy_data}.
\begin{proposition}
\label{th:direct_noisy_data}
\axassump.
\begin{myenumerate_indent}
\item
If $ u \in \overline{\R(A)} $ then $ \Pdeltau \to 0 $ as $ \delta \to 0 $.

\item
Let $ 0 < \pp \le 1 $. If $ u \in \R(A^\pp) $ then
$ \Pdeltau  = \Landau(\delta^{\pp/(\pp+1)}) $ as 
$ \delta \to 0 $.
\end{myenumerate_indent}
\end{proposition}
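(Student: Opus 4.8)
The plan is to reduce both parts to the exact-data estimates of Proposition \ref{th:direct_exact_data} together with the elementary decomposition of the total error into bias and propagated data error. Recall from \refeq{pdelta} that
\[
\Pdeltau = \sup_{\Ddelta: \, \norm{\Ddelta} \le \delta} \ \inf_{\para > 0} \norm{\fpara + (A+\para I)^{-1}\Ddelta},
\]
and that, by \refeq{postype}, for every $\para > 0$ and every admissible $\Ddelta$ we have $\norm{(A+\para I)^{-1}\Ddelta} \le M\delta/\para$. Hence for each fixed $\para > 0$,
\[
\Pdeltau \le \norm{\fpara} + M\tfrac{\delta}{\para},
\]
and it remains to choose $\para = \para(\delta)$ appropriately in each case.

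For part (a), assume $u \in \overline{\R(A)}$. By Proposition \ref{th:direct_exact_data} we have $\norm{\fpara} \to 0$ as $\para \to 0$. Given any $\eta > 0$, first pick $\para_0 > 0$ with $\norm{\fpara[\para_0]} < \eta/2$; then for all sufficiently small $\delta$ we have $M\delta/\para_0 < \eta/2$, so $\Pdeltau \le \norm{\fpara[\para_0]} + M\delta/\para_0 < \eta$. Since $\eta$ was arbitrary, $\Pdeltau \to 0$ as $\delta \to 0$.

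For part (b), assume $u \in \R(A^\pp)$ with $0 < \pp \le 1$; then Proposition \ref{th:direct_exact_data} gives a constant $C$ with $\norm{\fpara} \le C\para^\pp$ for all small $\para > 0$. Plugging this into the bound above yields $\Pdeltau \le C\para^\pp + M\delta/\para$ for small $\para$. Balancing the two terms, i.e.\ choosing $\para = \para(\delta)$ proportional to $\delta^{1/(\pp+1)}$ (so that $\para^\pp$ and $\delta/\para$ are of the same order $\delta^{\pp/(\pp+1)}$), we obtain $\Pdeltau = \Landau(\delta^{\pp/(\pp+1)})$ as $\delta \to 0$. The only point requiring mild care is that Proposition \ref{th:direct_exact_data} controls $\norm{\fpara}$ only for $\para$ in a neighbourhood of $0$; but the chosen $\para(\delta) \to 0$ as $\delta \to 0$, so this causes no difficulty. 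There is no real obstacle here — the statement is a standard direct estimate and the argument is essentially the balancing of bias against noise amplification; the substance of the paper lies in the converse direction (Section \ref{conv_noisdat} proper and Theorem \ref{th:converse_noisy_data_1}), not in this cited proposition.
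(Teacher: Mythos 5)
Your argument is correct: the uniform bound $\norm{(A+\para I)^{-1}\Ddelta} \le M\delta/\para$ over all admissible perturbations gives $\Pdeltau \le \norm{\fpara} + M\delta/\para$ for every fixed $\para>0$, and then part (a) follows by choosing $\para$ small first and letting $\delta\to 0$, while part (b) follows from the a priori balancing $\para \sim \delta^{1/(\pp+1)}$ together with the exact-data rate of Proposition \ref{th:direct_exact_data}. The paper proceeds differently in form, though not in substance: it bounds $\Pdeltau$ by the worst-case error of one concrete parameter choice, namely the modified discrepancy principle of Example \ref{th:discrepancy_principle_mod}, and cites the convergence and rate results of Plato/H\"amarik for that rule; your route is self-contained (modulo Proposition \ref{th:direct_exact_data}), needs no a posteriori rule and no external citation, whereas the paper's route has the side benefit of exhibiting an implementable, noise-level-only-dependent rule that realizes the bound. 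The paper itself remarks immediately after its proof that ``standard a priori parameter choices may be used as well,'' which is exactly your argument, so the two proofs should be regarded as interchangeable; your observation that the chosen $\para(\delta)\to 0$ keeps you inside the asymptotic regime where Proposition \ref{th:direct_exact_data} applies is the right point of care, and no gap remains.
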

\begin{proof}
%%In Plato/H\"amarik~\cite[Section 4]{Plato_Haemarik[96]}, an a 
The modified discrepancy principle,
cf.~Example \ref{th:discrepancy_principle_mod},
%%%posteriori parameter choice strategy
%%%$ \para = \para(\delta,\fdelta) $ is considered which
%%%% in fact will be specified below, see Section \ref{pseudooptimal}. It 
satisfies, see~\cite[Theorems 2.5 and 4.4]{Plato_Haemarik[96]}, $ \uparadel \to u $ as $ \delta \to 0 $ in the case $ u \in \overline{\R(A)} $.
In addition, for $ 0 < \pp \le 1 $ we have
$ \norm{\uparadel-u} = \Landau(\delta^{\pp/(\pp+1)}) $ as 
$ \delta \to 0 $ for each $ u \in \R(A^\pp) $.
The statement of the proposition 
now easily follows.
\end{proof}

\bn
%%%The a posteriori parameter choice strategy mentioned in the proof
%%of Proposition \ref{th:direct_noisy_data} will be specified below in fact, see Section \ref{pseudooptimal}. 
We note that standard a priori parameter choices may be used as well in this proof.
We may address the same topics as for exact data:

\begin{mylist_indent}
\item
Are the given convergence results in Proposition
\ref{th:direct_noisy_data}
optimal, or, in other terms, are the conditions
$ u \in \overline{\R(A)} $ and
$ u \in \R(A^\pp) $ stated in parts (a) and (b) there
also necessary, respectively?

\item
Is the considered range of values for $ \pp $,
considered in part (b) of that proposition,
maximal?
\end{mylist_indent}
We show in this section that the answers to those questions basically are affirmative, when
\accretive operators on Hilbert spaces are considered.
\subsection{The converse results \fornoisdat}
We start with a simple converse result which even holds in reflexive Banach spaces in fact. 
%
%%As a preparation
%
%%\begin{align}
%%%\Pdelta  = 
%%%\sup_{\Ddelta \in \ix: \norm{\Ddelta} \le \delta }
%%%\ 
%%%\inf_{\para > 0 } \norm{\eparau + (A+\para I)^{-1} \Ddelta }.
%%%\label{eq:pdelta-rep}
%%%\end{align} 
%
\begin{proposition}
\label{th:converse_noisy_data_2}
\axruassump.
If $ \Pdeltau  \to 0 $ as $ \delta \to 0 $, then necessarily $ u \in \overline{\R(A)} $ holds.
\end{proposition}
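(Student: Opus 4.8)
The plan is to mimic the structure of the proof of Theorem \ref{th:converse_exact_data_1}, using the reflexivity of $\ix$ together with the fact that $\Pdeltau\to 0$ controls the exact-data bias along a suitable sequence. First I would invoke Lemma \ref{th:r-n-decomp} from the appendix to write $u = \ur + \un$ with $\ur\in\overline{\R(A)}$ and $\un\in\N(A)$, and aim to show $\un = 0$, which immediately yields $u = \ur\in\overline{\R(A)}$. The main observation is that $\Pdeltau$ dominates, for every fixed $\delta>0$, the best attainable exact-data error: taking $\fdelta = f$ in the supremum defining $\Pdeltau$ in \refeq{pdelta} gives $\inf_{\para>0}\norm{\fpara} \le \Pdeltau$. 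Hence the hypothesis $\Pdeltau\to 0$ as $\delta\to 0$ forces $\inf_{\para>0}\norm{\fpara} = 0$, i.e.\ there is a sequence $\para_n\to$ (some limit, possibly after passing to a subsequence, or $\para_n$ ranging over a minimizing sequence) along which $\norm{\fparan}\to 0$, where $\fparan = \para_n(A+\para_n I)^{-1}u$ up to sign by \refeq{epara-rep}.

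Next I would split this minimizing sequence into two cases according to whether $\para_n$ stays bounded away from $0$ or has a subsequence tending to $0$. If $\para_n\to 0$ along a subsequence, then by Proposition \ref{th:direct_exact_data} applied to the $\overline{\R(A)}$-component we get $\para_n(A+\para_n I)^{-1}u = \para_n(A+\para_n I)^{-1}\ur + \un \to \un$, while the left side tends to $0$; hence $\un = 0$ and we are done. If instead $\inf_n \para_n =: \para_* > 0$, then $\norm{\fparan} = \para_n\norm{(A+\para_n I)^{-1}u} \ge \para_*\cdot\tfrac{1}{\norm{A+\para_n I}}\norm{u}$ — more carefully, $(A+\para_n I)^{-1}u \to 0$ combined with $(A+\para_n I)^{-1}$ being boundedly invertible with $\norm{A+\para_n I}\le \norm{A}+\para_n$ bounded on the (bounded) set of $\para_n$ forces $u = 0$, so again $\un = 0$; alternatively one argues that $\inf_{\para>0}\norm{\fpara}=0$ with $\norm{\fpara}\ge \para\norm{(A+\para I)^{-1}u}$ and $\norm{(A+\para I)^{-1}u}\ge \norm{u}/(\norm{A}+\para)$ shows the infimum over any compact subset of $(0,\infty)$ is positive unless $u=0$.

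The reflexivity hypothesis is used exactly as in Theorem \ref{th:converse_exact_data_3}: to guarantee that the relevant weak limits exist; but note that in the argument above, if one can always extract the subsequence with $\para_n\to 0$, reflexivity is not even needed here and the strong-convergence statement of Proposition \ref{th:direct_exact_data} suffices. I expect the main obstacle to be the bookkeeping around the minimizing sequence: ruling out cleanly the possibility that the infimum $\inf_{\para>0}\norm{\fpara}=0$ is only approached as $\para\to\infty$ (in which case $\para(A+\para I)^{-1}u\to u$, forcing $u=0$ and hence trivially $u\in\overline{\R(A)}$) versus as $\para\to 0$ (which gives $\un=0$). Both limiting regimes, however, conclude $u\in\overline{\R(A)}$, so a short case analysis on the behaviour of a minimizing sequence $(\para_n)$ — bounded away from $0$, escaping to $\infty$, or accumulating at $0$ — disposes of all possibilities and completes the proof.
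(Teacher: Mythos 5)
Your proposal is correct, but the second half takes a genuinely different route from the paper. Both proofs start identically: take exact data ($\Ddelta = 0$, i.e.\ $\fdelta = Au$) in \refeq{pdelta} to get $\inf_{\para>0}\norm{\fpara} \le \Pdeltau$, and both invoke the decomposition $u = \ur + \un$ from Lemma \ref{th:r-n-decomp}. The paper then finishes in one stroke, with no minimizing sequence and no case analysis: writing $\fpara = \fparau{\ur} - \un$ and using that $\fparau{\ur} \in \overline{\R(A)}$ for every $\para > 0$, the quantitative part of Lemma \ref{th:r-n-decomp} gives the $\delta$-independent lower bound $\Pdeltau \ge \inf_{\para>0}\norm{-\un + \fparau{\ur}} \ge \tfrac{1}{M}\norm{\un}$, so letting $\delta \to 0$ forces $\un = 0$ directly. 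You instead extract a minimizing sequence $(\para_n)$ for $\inf_{\para>0}\norm{\fpara}$ and distinguish three regimes: $\para_n$ accumulating at $0$ (where Proposition \ref{th:direct_exact_data} applied to $\ur$ yields $\un = 0$), $\para_n$ bounded and bounded away from $0$ (where $\norm{(A+\para I)^{-1}u} \ge \norm{u}/(\norm{A}+\para)$ forces $u = 0$), and $\para_n \to \infty$ (where $\norm{\fpara} \to \norm{u}$ forces $u = 0$); the cases are exhaustive and each conclusion implies $u \in \overline{\R(A)}$, so your argument is sound. What the paper's route buys is brevity and a uniform quantitative estimate ($\Pdeltau \ge \norm{\un}/M$ for all $\delta$); what your route buys is that it avoids the norm estimate $\norm{\un} \le M\norm{\ur + \un}$ and the invariance of $\overline{\R(A)}$ under the resolvent. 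Two small remarks: your aside that reflexivity might be dispensable is not quite accurate for the argument as written, since the decomposition you use in the $\para_n \to 0$ case is exactly where Lemma \ref{th:r-n-decomp} (and hence reflexivity) enters; on the other hand, your observation that $\upara = A(A+\para I)^{-1}u \in \R(A)$ could be pushed further: from $\inf_{\para>0}\norm{\upara - u} \le \Pdeltau \to 0$ one gets $\operatorname{dist}(u,\R(A)) = 0$ immediately, which would collapse your whole case analysis (and the decomposition) into one line.
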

\begin{proof}
We consider the decomposition $ u = \ur + \un $ with 
$ \ur \in \overline{\R(A)}, \, \un \in \N(A) $,
%%%see, e.g., \cite[Theorem 1.1.10]{Plato[95]}.
see Lemma \ref{th:r-n-decomp} in the appendix.
From this and the consideration of $ \fdelta = A u $ in the definition of $ \Pdelta $
we obtain
\begin{align}
\Pdeltau \ge \inf_{\para>0} \norm{\fpara + 0}
\ge \inf_{\para>0} \norm{-\hspace{-0.5mm}\un + \fparau{\ur}}
\ge \tfrac{1}{M} \norm{\un}
\label{eq:converse_noisy_data_2}
\end{align}
for each $ \delta > 0 $.
%% since $ \eparaur \in \overline{\R(A)} $ for each $ \para > 0 $.
The latter estimate in
\refeq{converse_noisy_data_2}
follows again by Lemma \ref{th:r-n-decomp} in the appendix and the fact that
$ \eparaur \in \overline{\R(A)} $ holds for each $ \para > 0 $.
Letting $ \delta \to 0 $ in \refeq{converse_noisy_data_2}
shows $ \un = 0 $ which completes the proof.
\end{proof}
%
%%%with inner product $ \skp{\cdot}{\cdot}: \ixh \times \ixh \to \koza $.
%

\bn
The following lemma serves as preparation for the converse and saturation results
related with noisy data.
\begin{lemma}
\label{th:balance_parameter_choice}%%
\ahuillpassumpzer. Let the parameters
$ \delta > 0 $ and $ \parab > 0 $ be related by
\begin{align} 
\label{eq:balance_parameter_choice}%%
\deltapara = \parab^2 \norm{(A + \parab I)^{-1} u }.
\end{align}
Then we have
\begin{align} 
\label{eq:saturation_noisy_data_b}%%
\norm{\eparauxb} = \frac{\deltapara}{\parab} \le \Pdeltau.
\end{align}
\end{lemma}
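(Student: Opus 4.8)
The statement \refeq{saturation_noisy_data_b} has two parts, and I would handle them in turn. Write $\fpara = \eparaux = \upara - u$ as in \refeq{upara-epara-def}. The equality $\norm{\eparauxb} = \tfrac{\delta}{\parab}$ is immediate from the bias representation \refeq{epara-rep}: it gives $\eparauxb = \fparab = -\parab\,(A+\parab I)^{-1}u$, hence $\norm{\eparauxb} = \parab\,\norm{(A+\parab I)^{-1}u}$, and dividing the defining relation \refeq{balance_parameter_choice} by $\parab$ yields precisely $\tfrac{\delta}{\parab} = \parab\,\norm{(A+\parab I)^{-1}u}$. It therefore remains to prove the inequality $\tfrac{\delta}{\parab} \le \Pdeltau$.

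For this I would pass through the auxiliary quantities from \refeq{rdelta}, taken with $M=1$ as is permitted for an \accretive operator. Concretely, the plan is to establish the chain
\[
\tfrac{\delta}{\parab} \;\le\; \Rdeltau{\infty} \;\le\; \Rdeltau{2} \;\le\; \Pdeltau,
\]
in which the middle inequality is the elementary comparison of $p$-norms recorded just after \refeq{rdelta}, and the last inequality is Theorem~\ref{th:pseuopt=orderopt}, which applies since $\R(A)\neq\ixh$ is assumed here. Thus everything reduces to the first inequality, $\tfrac{\delta}{\parab} \le \Rdeltau{\infty}$.

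To prove it, fix an arbitrary $\para > 0$ and bound $\max\{\norm{\fpara},\,\tfrac{\delta}{\para}\}$ from below by $\tfrac{\delta}{\parab}$. If $\para \le \parab$, then $\tfrac{\delta}{\para}\ge\tfrac{\delta}{\parab}$ at once. If $\para > \parab$, then monotonicity of the bias (Lemma~\ref{th:bias-monotone} in the appendix) gives $\norm{\fpara}\ge\norm{\fparab}$, and $\norm{\fparab} = \tfrac{\delta}{\parab}$ by the first part of the proof. In either case $\max\{\norm{\fpara},\,\tfrac{\delta}{\para}\}\ge\tfrac{\delta}{\parab}$; taking the infimum over $\para>0$ gives $\Rdeltau{\infty}\ge\tfrac{\delta}{\parab}$, which completes the argument (the value $\para=\parab$ shows that in fact $\Rdeltau{\infty}=\tfrac{\delta}{\parab}$, though only the inequality is needed). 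No step here is a genuine obstacle: the proof is a short composition of facts already established, and the only points requiring attention are the reduction to $M=1$ for \accretive operators and using the monotonicity of $\para\mapsto\norm{\fpara}$ in the correct (non-decreasing) direction. One could alternatively argue directly from the definition \refeq{pdelta} by inserting a suitable perturbation $\fdelta = f - \delta\,\phibetapara$ and imitating the proof of Theorem~\ref{th:pseuopt=orderopt} via Lemmas~\ref{th:resolvent_maximize} and~\ref{th:beta_alpha_choice}, but routing through $\Rdeltau{2}$ is considerably shorter.
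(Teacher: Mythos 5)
Your proposal is correct and follows essentially the same route as the paper's own proof: reduce via Theorem~\ref{th:pseuopt=orderopt} (and the comparison $\Rdeltau{\infty}\le\Rdeltau{2}$) to showing $\tfrac{\delta}{\parab}\le\Rdeltau{\infty}$, and then obtain this from the monotonicity of the bias (Lemma~\ref{th:bias-monotone}) for $\para\ge\parab$ together with the trivial bound $\tfrac{\delta}{\para}\ge\tfrac{\delta}{\parab}$ for $\para\le\parab$, the equality $\norm{\eparauxb}=\tfrac{\delta}{\parab}$ being immediate from \refeq{epara-rep} and \refeq{balance_parameter_choice}. Your write-up merely spells out the implicit steps (the $p$-norm comparison and the $M=1$ reduction) a bit more explicitly.
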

\begin{proof}
Due to Theorem \ref{th:pseuopt=orderopt} it is sufficient to show that
$ \norm{\eparauxb} = \frac{\deltapara}{\parab} \le \Rdeltau{\infty} $ holds.
%%%The latter inequality follows immediately from monotonicity properties.
In fact, by monotonicity we have
$ \norm{\eparauxb} \le \norm{\eparaux} $ for $ \para \ge \parab $ 
(cf.~Lemma \ref{th:bias-monotone} in the appendix), and
$ \frac{\deltapara}{\parab} \le \frac{\deltapara}{\para} $ evidently holds for
$ 0 < \para \le \parab $.
The identity
in \refeq{saturation_noisy_data_b} is a direct consequence of the identity
\refeq{balance_parameter_choice}.
\end{proof}

\begin{remark}
\label{th:balance_parameter_choice_remark}%%
In the proofs of the following two theorems, Lemma \ref{th:balance_parameter_choice} is applied by choosing the noise level $ \delta $ as a function of the parameter $ \parab $. This remark, however, considers the converse case where $ \parab = \paradelta $
is chosen as a function of $ \delta > 0 $, i.e.,
\begin{align*} 
\delta =  \paradelta^2 \norm{(A + \paradelta I)^{-1} u }.
\end{align*} 
It immediately follows from Lemma \ref{th:balance_parameter_choice} that this parameter choice strategy is \orderoptimal. 
Note that this strategy is of theoretical interest only, and moreover note that the existence of $ \paradelta $ follows from Corollary~\ref{th:bias-monotone-corollary} in the appendix.
\remarkend
\end{remark}
We now present the main converse result related with noisy data.
\begin{theorem}
\label{th:converse_noisy_data_1}%%
\ahuassump.
If, for some $ 0 < \pp \le 1 $, we have
 $ \Pdeltau  = \Landau(\delta^{\pp/(\pp+1)}) $ as $ \delta \to 0 $, then $ \norm{\eparaux} = \Landau(\para^\pp) $ as $ \para \to 0 $ holds.
\end{theorem}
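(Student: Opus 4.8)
The plan is to obtain the conclusion by combining the inequality $ \Rdeltau{2} \le \Pdeltau $ from Theorem~\ref{th:pseuopt=orderopt} with the balancing identity of Lemma~\ref{th:balance_parameter_choice}, after which only an elementary algebraic manipulation remains.

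First I would dispose of the degenerate cases, since the standing assumption here is only \ahuassump and does not include a range condition. If $ u = 0 $, then $ \eparaux = 0 $ for every $ \para > 0 $ and nothing has to be shown. If $ \R(A) = \ixh $, then $ A $ has a bounded inverse (recall from the remark following Lemma~\ref{th:resolvent_maximize} that $ \R(A) \neq \ixh $ is equivalent to $ 0 \in \sigma(A) $), hence $ (A + \para I)^{-1} = (I + \para A^{-1})^{-1} A^{-1} $ stays uniformly bounded for small $ \para $, and therefore $ \norm{\eparaux} = \para \norm{(A + \para I)^{-1} u} = \Landau(\para) = \Landau(\para^\pp) $ because $ \pp \le 1 $. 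So from now on I assume $ u \neq 0 $ and $ \R(A) \neq \ixh $, which is exactly the setting in which Theorem~\ref{th:pseuopt=orderopt} and Lemma~\ref{th:balance_parameter_choice} are available.

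By hypothesis there are constants $ C > 0 $ and $ \delta_0 > 0 $ with $ \Pdeltau \le C \delta^{\pp/(\pp+1)} $ for $ 0 < \delta \le \delta_0 $. Let $ \parab > 0 $ be small, precisely $ \parab \norm{u} \le \delta_0 $, and put $ \delta := \parab^2 \norm{(A + \parab I)^{-1} u} = \parab \norm{\eparauxb} $. This $ \delta $ is strictly positive because $ u \neq 0 $ and $ (A + \parab I)^{-1} $ is injective, and it satisfies $ \delta \le \parab \norm{u} \le \delta_0 $ since $ \norm{(A + \parab I)^{-1}} = 1/\parab $ for an accretive operator; in particular $ \delta \to 0 $ as $ \parab \to 0 $. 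Now Lemma~\ref{th:balance_parameter_choice} gives $ \norm{\eparauxb} = \delta/\parab \le \Pdeltau \le C \delta^{\pp/(\pp+1)} $, and inserting $ \delta = \parab \norm{\eparauxb} $ yields $ \norm{\eparauxb} \le C \big( \parab \norm{\eparauxb} \big)^{\pp/(\pp+1)} $. Dividing by $ \norm{\eparauxb}^{\pp/(\pp+1)} $ and using $ 1 - \pp/(\pp+1) = 1/(\pp+1) $ leaves $ \norm{\eparauxb}^{1/(\pp+1)} \le C \parab^{\pp/(\pp+1)} $, that is, $ \norm{\eparauxb} \le C^{\pp+1} \parab^\pp $. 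Since $ \parab $ was an arbitrary sufficiently small positive number, this is precisely $ \norm{\eparaux} = \Landau(\para^\pp) $ as $ \para \to 0 $.

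The genuinely substantial ingredients, namely the equivalence of strong and weak quasi-optimality in Theorem~\ref{th:pseuopt=orderopt} and the construction behind Lemma~\ref{th:balance_parameter_choice}, are already established, so no real obstacle is expected here. The one point requiring care is the implicit, self-referential choice $ \delta = \parab \norm{\eparauxb} $: one has to confirm both that it tends to $ 0 $ together with $ \parab $ (so that the assumed decay of $ \Pdeltau $ may be invoked) and that it is strictly positive (so that the division step in the algebra is legitimate). Both follow at once from $ u \neq 0 $ and the resolvent bound $ \norm{(A + \parab I)^{-1}} = 1/\parab $.
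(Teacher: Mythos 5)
Your argument is correct and follows essentially the same route as the paper: the self-referential balancing choice $\delta = \parab^2 \norm{(A+\parab I)^{-1}u}$ from Lemma~\ref{th:balance_parameter_choice} (resting on Theorem~\ref{th:pseuopt=orderopt}), followed by the same elementary manipulation of $\delta/\parab \le C\,\delta^{\pp/(\pp+1)}$. The only cosmetic difference is that you treat the case $\R(A) = \ixh$ by a direct bounded-inverse estimate where the paper simply cites Proposition~\ref{th:direct_exact_data}, and you are somewhat more explicit than the paper about $\delta(\parab) > 0$ and $\delta(\parab) \to 0$.
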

\proof 
If $ \R(A) = \ixh $ holds, then the statement of the theorem follows immediately from Proposition~\ref{th:direct_exact_data}.
We now assume that $ \R(A) \not = \ixh $, and without loss of generality we may also assume that $ u \not = 0 $ holds.
%%%In the case $ \pp = 1 $, this assumption results in a proof by contradiction. 
%%In the sequel we assume that $ u \not \in \R(A) $ holds.
%%%In the case $ 0 < \pp < 1 $ this can be done without loss of generality, since otherwise
%%%Theorem \ref{th:converse_exact_data_3}.
%%
Due to the usage of $ \para $ in \refeq{pdelta}, we change notation here
and show $ \norm{\fparab} = \Landau(\parab^\pp) $ as $ \parab \to 0 $.
%%%Let $ 0 < \parab < \frac{\norm{Au}}{\norm{u}} $ 
Let $ \parab > 0 $ be arbitrary but fixed, and let
$ \delta = \delta(\parab) $ be given by
\begin{align*} 
\delta = \parab^2 \norm{(A + \parab I)^{-1} u },
\end{align*} 
cf.~Lemma~\ref{th:balance_parameter_choice}. From that lemma we now obtain
\begin{align*} 
\frac{\delta}{\parab}
\le \Pdeltau \le c \delta^{p/(p+1)} 
\end{align*} 
for some constant $ c $ which may be chosen independently from $ \delta $, and then
$ \delta^{1/(p+1)} \le c \parab $ and thus 
$ \delta^{p/(p+1)}  \le c^\pp \parab^\pp $ holds. This finally gives
\begin{align*} 
\norm{\fparab} 
=
\frac{\delta}{\parab}
%%%\le \Pdeltau
\le c \delta^{p/(p+1)} 
\le c^{\pp+1} \parab^\pp,
\end{align*} 
and this completes the proof of the theorem.
%%%, if $ \pp < 1 $. In the case 
%%%$ \pp = 1 $, Proposition \ref{th:direct_exact_data} now results in a contradiction to
%%%our assumption $ u \not \in \R(A) $, and so
%%necessarily $ u \in \R(A) $ holds.
%%%An application of Theorem \ref{th:converse_exact_data_3}
%%%completes the proof of the theorem also for $ \pp = 1 $.
\endproof

\bn
As an immediate consequence 
of Theorems \ref{th:converse_exact_data_2},
\ref{th:converse_exact_data_3} and
\ref{th:converse_noisy_data_1} we obtain the following result.
\begin{corollary}
\label{th:converse_noisy_data_1b}%%
\ahuassump.
\begin{myenumerate_indent}
\item
Let $ 0 < \pp < 1 $. 
If $ \Pdeltau  = \Landau(\delta^{\pp/(\pp+1)}) $ as 
$ \delta \to 0 $, then $ u \in \R(A^\qq) $ for each $ 0 < \qq < \pp $.

\item
If $ \Pdeltau  = \Landau(\delta^{1/2}) $ as 
$ \delta \to 0 $, then $ u \in \R(A) $.
\end{myenumerate_indent}
\end{corollary}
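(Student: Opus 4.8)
The plan is to derive both statements by chaining the converse result for noisy data (Theorem~\ref{th:converse_noisy_data_1}) with the two converse results for exact data (Theorems~\ref{th:converse_exact_data_2} and \ref{th:converse_exact_data_3}). As a preliminary observation I would note that an \accretive bounded linear operator on a Hilbert space satisfies \refeq{postype} with $ M = 1 $ and that every Hilbert space is reflexive; hence the present setting falls under the hypotheses of all three cited theorems (Theorem~\ref{th:converse_noisy_data_1} is stated for \accretive operators on Hilbert spaces, while Theorems~\ref{th:converse_exact_data_2} and \ref{th:converse_exact_data_3} are stated for \postype operators on reflexive Banach spaces, a class that contains ours).

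For part~(a), I would start from the assumption $ \Pdeltau = \Landau(\delta^{\pp/(\pp+1)}) $ as $ \delta \to 0 $ with $ 0 < \pp < 1 $ and apply Theorem~\ref{th:converse_noisy_data_1}, which yields $ \norm{\eparaux} = \Landau(\para^\pp) $ as $ \para \to 0 $. Since $ 0 < \pp < 1 $, Theorem~\ref{th:converse_exact_data_2} is applicable and gives $ u \in \R(A^\qq) $ for each $ 0 < \qq < \pp $, which is exactly the assertion.

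For part~(b), I would invoke Theorem~\ref{th:converse_noisy_data_1} with $ \pp = 1 $ (so that $ \pp/(\pp+1) = 1/2 $): from $ \Pdeltau = \Landau(\delta^{1/2}) $ as $ \delta \to 0 $ it follows that $ \norm{\eparaux} = \Landau(\para) $ as $ \para \to 0 $. Theorem~\ref{th:converse_exact_data_3} then yields $ u \in \R(A) $.

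I do not expect any genuine obstacle here: the argument is a purely formal composition of three previously established implications, and the only thing requiring (minimal) attention is the verification that the structural hypotheses transfer cleanly between the Hilbert-space and reflexive-Banach-space formulations, which is immediate.
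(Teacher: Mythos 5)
Your proposal is correct and is exactly the paper's argument: the corollary is stated there as an immediate consequence of Theorems \ref{th:converse_exact_data_2}, \ref{th:converse_exact_data_3} and \ref{th:converse_noisy_data_1}, chained precisely as you do, with the same routine observation that an \accretive operator on a (reflexive) Hilbert space is \ofpostype with $M=1$.
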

\begin{remark}
\label{th:closed_range_noisy_data}
\axrassump.
Throughout this section we have not required that the range $ \R(A) $ 
is non-closed, in general. In case of a closed range, i.e., $ \R(A) = \overline{\R(A)} $,
the results of
Propositions \ref{th:direct_noisy_data} and \ref{th:converse_noisy_data_2}
%% as well as Corollary \ref{th:converse_noisy_data_1b} 
%%%and
%%%Theorem \ref{th:converse_exact_data_1}, and
%%%Theorem \ref{th:converse_exact_data_3}
can be summarized as follows:
\begin{align}
\lim_{\delta \to 0} \Pdeltau = 0
\ \Longleftrightarrow  \
%%%u \in \overline{\R(A)}
u \in \R(A)
\ \Longleftrightarrow  \
\Pdeltau  = \Landau(\delta^{1/2}) \textup{ as } 
\delta \to 0.
\label{eq:closed_range_noisy_data_1}
\end{align}
The case $ 0 < p < 1 $ considered in Theorem \ref{th:converse_noisy_data_1} (in the Hilbert space setting in fact) is not relevant in the closed range case.

If we have even $ \R(A) = \overline{\R(A)} $ and $ \N(A) = \{0\} $
(which in fact is equivalent to the identity $ \R(A) = \ix $, 
cf.~Lemma~\ref{th:r-n-decomp} in the appendix),
then
$ \Pdeltau  = \Landau(\delta) $ as $ \delta \to 0 $ holds for each $ u \in \ix $.
This follows from 
$ \max_{\para \ge 0} \norm{(A+\para I)^{-1}} < \infty $.
\remarkend
\end{remark}
We have completed our considerations of converse results for \lavmet \fornoisdat. Saturation will be considered in the next section.
%
%%%\section{Converse and saturation results for noisy data}
\section{Saturation \fornoisdat}
\label{saturation_noisdat}
%
%
%%%The result of the first part of the proof of Theorem \ref{th:converse_noisy_data_1} may be used for other purposes, and thus it is formulated as a corollary.
%
%%%\begin{corollary}
%%%\label{th:optimal-corollary}%%
%%%\ahuassumpzer.
%%Let $ \para > 0 $ and $ \delta > 0 $ be related by
%
%%%$ \delta = \para^2 \norm{(A + \para I)^{-1} u } $.
%
%%%and in addition let $ 0 < \para < \frac{\norm{Au}}{\norm{u}} $.
%%%Then we have 
%
%%%\begin{align}
%%%$ \Pdeltau \ge \norm{\eparau} = \frac{\delta}{\para} $.
%%%\label{eq:optimal-corollary}
%%%\end{align}
%%%\end{corollary}
%
%%%\section{\Orderoptimality, \pseudooptimality, and saturation for noisy data}
We are now in a position to present a saturation result for \lavmet in case of perturbed data.
\begin{theorem}
\label{th:saturation_noisy_data}%%
\ahillpassump, and let $ u \in \ixh $.
If $ \Pdelta(u)  = \landau(\delta^{1/2}) $ as $ \delta \to 0 $, then
%%% $ \norm{\epara} = \landau(\para) $ as $ \para \to 0 $ and thus 
necessarily $ u  = 0 $ holds.
\end{theorem}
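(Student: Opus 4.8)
The plan is to reduce the statement to the exact‑data saturation result, in the sequential form given by part~\ref{liminf-weakening-exact-data_2} of Corollary~\ref{th:liminf-weakening-exact-data}, using the balancing parameter of Lemma~\ref{th:balance_parameter_choice} to transfer the $\delta$‑decay of $\Pdeltau$ into a $\gamma$‑decay of the bias $e_\gamma(u)$. Concretely, I would argue by contradiction and suppose $u \neq 0$. For each $\delta > 0$ choose $\gamma_\delta > 0$ with $\delta = \gamma_\delta^2 \norm{(A + \gamma_\delta I)^{-1} u}$; such a $\gamma_\delta$ exists by Corollary~\ref{th:bias-monotone-corollary} in the appendix, as already observed in Remark~\ref{th:balance_parameter_choice_remark}. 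Lemma~\ref{th:balance_parameter_choice} then supplies the estimate $\norm{e_{\gamma_\delta}(u)} = \delta/\gamma_\delta \le \Pdeltau$. This is where the one nontrivial ingredient enters: that inequality rests on Theorem~\ref{th:pseuopt=orderopt} (that is, on $\Rdeltau{2} \le \Pdeltau$), which itself uses the hypothesis $\R(A) \neq \ixh$ in an essential way.

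From here the argument is elementary asymptotics. Combining $\delta/\gamma_\delta \le \Pdeltau = \landau(\delta^{1/2})$ with a division by $\delta^{1/2}$ gives $\delta^{1/2}/\gamma_\delta \to 0$ as $\delta \to 0$, and squaring yields $\norm{e_{\gamma_\delta}(u)}/\gamma_\delta = \delta/\gamma_\delta^2 = (\delta^{1/2}/\gamma_\delta)^2 \to 0$, that is, $\norm{e_{\gamma_\delta}(u)} = \landau(\gamma_\delta)$ as $\delta \to 0$. Before feeding this into the exact‑data saturation result I must verify that $\gamma_\delta \to 0$ as $\delta \to 0$: otherwise there is a sequence $\delta_k \to 0$ with $\gamma_{\delta_k} \ge c > 0$, and then bias monotonicity (Lemma~\ref{th:bias-monotone}) together with $e_c(u) = -c(A + cI)^{-1} u \neq 0$ — by injectivity of $(A + cI)^{-1}$ and $u \neq 0$ — forces $\norm{e_{\gamma_{\delta_k}}(u)} \ge \norm{e_c(u)} > 0$, contradicting $\norm{e_{\gamma_{\delta_k}}(u)} = \delta_k/\gamma_{\delta_k} \le P^{\delta_k}(u) \to 0$. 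Now choose any sequence $\delta_n \to 0$ and set $\gamma_n := \gamma_{\delta_n}$; then $\gamma_n \to 0$ and $\norm{e_{\gamma_n}(u)} = \landau(\gamma_n)$, so part~\ref{liminf-weakening-exact-data_2} of Corollary~\ref{th:liminf-weakening-exact-data} gives $u = 0$, contradicting $u \neq 0$. Hence $u = 0$.

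The only real obstacle is that the bias decay is obtained along the $\delta$‑parametrised family $(\gamma_\delta)_{\delta>0}$ rather than along all small $\gamma$; establishing $\gamma_\delta \to 0$ and then invoking the sequential form of exact‑data saturation (part~\ref{liminf-weakening-exact-data_2} of Corollary~\ref{th:liminf-weakening-exact-data}), rather than Theorem~\ref{th:saturation_exact_data} directly, is what bridges this gap. I would also stress that the hypothesis $\R(A) \neq \ixh$ is essential: if $\R(A) = \ixh$ then $\Pdeltau = \Landau(\delta)$ for every $u \in \ixh$ (cf.\ Remark~\ref{th:closed_range_noisy_data}), which is $\landau(\delta^{1/2})$, so the asserted implication would be false.
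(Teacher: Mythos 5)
Your argument is correct, and it rests on exactly the same two pillars as the paper's proof: the balancing relation of Lemma~\ref{th:balance_parameter_choice} (hence Theorem~\ref{th:pseuopt=orderopt}, where $\R(A)\neq\ixh$ enters) to transfer the assumed decay of $\Pdeltau$ into decay of the bias, followed by exact-data saturation, all within a proof by contradiction. The only genuine difference is the direction of the coupling between $\delta$ and the regularization parameter. The paper takes $\parab\to 0$ as the free variable and sets $\delta(\parab)=\parab^2\norm{(A+\parab I)^{-1}u}$; then $\delta(\parab)>0$ and $\delta(\parab)\to 0$ are immediate, the relation $\norm{\fparab}=\delta(\parab)/\parab=\landau(\parab)$ is obtained for the full limit $\parab\to 0$, and Theorem~\ref{th:saturation_exact_data} applies directly, with no existence or convergence questions to settle. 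You instead solve for $\gamma_\delta$ as a function of $\delta$, which obliges you to invoke Corollary~\ref{th:bias-monotone-corollary} for existence, to verify $\gamma_\delta\to 0$ (your monotonicity argument is fine; it also follows at once from the strict monotonicity and continuity of $\para\mapsto\para^2\norm{(A+\para I)^{-1}u}$ stated in that corollary, as the paper notes in Remark~\ref{th:balance_parameter_choice_remark}), and to fall back on the sequential form of exact-data saturation, part~\ref{liminf-weakening-exact-data_2} of Corollary~\ref{th:liminf-weakening-exact-data}, rather than Theorem~\ref{th:saturation_exact_data} itself. This is precisely the variant the paper sketches in the remark following Theorem~\ref{th:saturation_noisy_data} for the weakened hypothesis $\liminf_{\delta\to 0}\Pdeltau/\delta^{1/2}=0$; indeed your argument carries over essentially verbatim to that weaker assumption (restrict to a sequence $\delta_k$ realizing the liminf), so your route proves a bit more, at the price of the extra bookkeeping that the paper's choice of parametrization avoids.
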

\begin{proof}
%%%We assume contradictorily that $ u \not = 0 $ holds.
We prove the theorem by contradiction and assume that $ u \not = 0 $ holds.
For any $ \parab > 0 $ consider
\begin{align} 
\deltapara = \delta(\parab) :=  \parab^2 \norm{(A + \parab I)^{-1} u } > 0,
\label{eq:balance_parameter_choice_b}
\end{align} 
cf.~Lemma \ref{th:balance_parameter_choice}.
From this lemma we then obtain
\begin{align*} 
\frac{\deltapara}{\parab} \le \Pdeltau
= \landau(\deltapara)^{1/2} \quad \textup{as } \ \parab \to 0,
\end{align*}
and thus 
%
%%\begin{align*} 
%%\delta^{1/2} = \landau(\parab) \quad \textup{ as } \parab \to 0.
%%%\end{align*}
$ \deltapara^{1/2} = \landau(\parab) $ as $\parab \to 0 $.
Note that 
$ \deltapara = \delta(\parab) > 0 $ for each $ \parab > 0 $, and
$ \deltapara \to 0 $ as
$ \parab \to 0 $.
This finally gives
\begin{align*} 
\norm{\eparauxb} 
%%%\le \Pdeltau
= \frac{\deltapara}{\parab}
= \landau(\deltapara^{1/2})
= \landau(\parab)
 \quad \textup{ as }
\parab \to 0.
\end{align*}
%
%%%where the first inequality follows from Lemma \ref{th:balance_parameter_choice} again.
Theorem \ref{th:saturation_exact_data} now yields $ u = 0 $, a contradiction to the assumption made in the beginning of our proof.
\end{proof}
\begin{remark}
We note that the assumptions in 
Theorem \ref{th:saturation_noisy_data}
may be weakened without changing the conclusion of the theorem.
We may in fact replace the condition
$ \Pdelta(u)  = \landau(\delta^{1/2}) $ 
as $ \delta \to 0 $ by
$ \liminf_{\delta \to 0} \Pdelta(u)/\delta^{1/2} = 0 $ there.
%%%This follows quite similar as in the proof of Theorem \ref{th:saturation_noisy_data}.
The only necessary modification in the proof
of Theorem \ref{th:saturation_noisy_data}
is that $ \parab = \paradelta $ in 
\refeq{balance_parameter_choice_b}
is chosen as a function of $ \delta > 0 $ then, i.e.,
$ \delta =  \paradelta^2 \norm{(A + \paradelta I)^{-1} u } $, and
part \ref{liminf-weakening-exact-data_2} of
Corollary \ref{th:liminf-weakening-exact-data} is also applied
in this case.

Further notes on $ \paradelta $ are given in Remark~\ref{th:balance_parameter_choice_remark}.  Note that we have $ \paradelta \to 0 $ as $ \delta \to 0 $ which 
follows
from Corollary~\ref{th:bias-monotone-corollary} in the appendix.

The weakened version of Theorem \ref{th:saturation_noisy_data}
implies that for given $ u \neq 0 $ and 
$ \delta_0 > 0 $,
there exists a constant $ c > 0 $ such that
\begin{align*}
c \delta^{1/2} \le \Pdeltau \quad  \textup{for } \ 0 < \delta \le \delta_0.
\end{align*}
We note that there exists a result for Tikhonov regularization which is similar to
Theorem~\ref{th:saturation_noisy_data}.
%%% of the hypothesis in Theorem \ref{th:saturation_noisy_data}
For Tikhonov regularization, however, a weakening like the one considered in the present remark is not possible.
For a counterexample see
%%%related with Tikhonov regularization, see
Neubauer~\cite{Neubauer[97]}.
%%%A similar modification is possible in Proposition \ref{th:converse_noisy_data_2}.
\remarkend
\end{remark}
\begin{remark}
\label{th:closed_range_noisy_data_2}
Note that the assumption $ \R(A) \not = \ixh $
 made in Theorem \ref{th:closed_range_noisy_data} 
includes the case $ \overline{\R(A)} = \R(A), \, \N(A) \not = \{0\} $
(closed range, nontrivial nullspace). 
Note moreover that the saturation level is different if $ \R(A) = \ixh $ holds. In this case we have (even for \postype operators on Banach spaces)
$ \Pdeltau  = \Landau(\delta) $ as $ \delta \to 0 $ for each $ u \in \ixh $,
cf.~Remark \ref{th:closed_range_noisy_data}.
\remarkend
\end{remark}
\section{Auxiliary results for \postype operators}
\label{auxiliary}
%%%and \accretive operators}
%
In this section we present some auxiliary results which are being used at several places in this paper.
We start with a structural result on the range and nullspace of a \postype operator on a reflexive Banach space.
\begin{lemma} 
\label{th:r-n-decomp}%%
For a \posboundlinear operator $ A: \ix \to \ix $ on a reflexive Banach space $ \ix $ we have $ \overline{\R(A)} \oplus \N(A) = \ix $, where the symbol $ \oplus $ denotes direct sum.
In addition, there holds $ \norm{ \un } \le M \norm{ \ur+\un  } $
for each $  \ur \in \overline{\R(A)} $ and each $ \un \in \N(A) $, where 
the constant $ M $ is taken from \refeq{postype}.
\end{lemma}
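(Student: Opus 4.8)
The plan is to exploit the family of bounded operators $B_\para := \para(A+\para I)^{-1}$, which are uniformly bounded by $M$ (this is exactly \refeq{postype}), and to study their behaviour as $\para \to 0$. The key observation is that $B_\para$ restricted to $\overline{\R(A)}$ converges pointwise to the identity (as noted in Proposition \ref{th:direct_exact_data}, since $\norm{\eparaux} = \norm{\para(A+\para I)^{-1}u} \to 0$ when $u \in \overline{\R(A)}$), while $B_\para$ acts as the identity on $\N(A)$ (if $Au = 0$ then $(A+\para I)u = \para u$, so $(A+\para I)^{-1}u = \para^{-1}u$ and $B_\para u = u$). So on the algebraic sum $\R(A) + \N(A)$, and hence on its closure, $B_\para$ converges strongly to a projection-like limit.

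First I would prove the decomposition. For the intersection: if $u \in \overline{\R(A)} \cap \N(A)$, then $B_\para u \to u$ from the first property and $B_\para u = u$ for all $\para$ from the second, so $u = \lim_{\para\to 0} B_\para u$; but also, writing $B_\para u = u$ identically gives nothing directly — instead note that for $u \in \N(A)$ we have $B_\para u = u$, while the bias representation forces $\norm{B_\para u} = \norm{\para(A+\para I)^{-1}u} \to 0$ when $u \in \overline{\R(A)}$; combining, $u = 0$. For the sum being all of $\ix$: this is where reflexivity enters. Given $u \in \ix$, the net $(B_\para u)_{\para > 0}$ is bounded by $M\norm{u}$, so by reflexivity it has a weakly convergent subnet $B_{\para_j} u \rightharpoonup \ur$. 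Since $A B_\para u = \para(A+\para I)^{-1}Au = B_\para(Au)$ and $Au \in \R(A) \subseteq \overline{\R(A)}$, we get $A B_{\para_j}u \to Au$ strongly; on the other hand $A B_{\para_j} u \rightharpoonup A\ur$ by weak continuity of $A$. Hence $A\ur = Au$, i.e. $u - \ur \in \N(A)$. It remains to check $\ur \in \overline{\R(A)}$: write $\ur$ as a weak limit of $B_{\para_j}u = u - (A+\para_j I)^{-1}Au \cdot$ — more cleanly, $B_\para u - u = -(A+\para I)^{-1}Au \in \R((A+\para I)^{-1}A) \subseteq \overline{\R(A)}$, so each $B_\para u$ lies in $u + \overline{\R(A)}$; but that affine subspace is weakly closed (it is closed and convex), so $\ur \in u + \overline{\R(A)}$, giving $\ur - u \in \overline{\R(A)}$, hence combined with $\ur - u \in \N(A)$... that is the wrong sign. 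Instead I observe directly that $B_\para u \in \overline{\R(A)}$ is false in general; the correct route is: $u - B_\para u = (A+\para I)^{-1}Au$, and since $Au \in \R(A)$, Proposition \ref{th:direct_exact_data} applied to the element $Au$ shows $(A+\para I)^{-1}Au = \para^{-1}(B_\para(Au))$ need not converge. Let me instead take $\ur$ to be the weak limit and note $\ur \in \overline{\R(A)}$ because $\overline{\R(A)}$ is weakly closed and each $B_{\para}(Av)$ lies in $\overline{\R(A)}$; to land $\ur$ itself in $\overline{\R(A)}$ I apply $B_\para$ again: $B_\para \ur = \text{w-}\lim_j B_\para B_{\para_j} u$, and for fixed $\para$, $B_\para B_{\para_j} u \to B_\para u$ as $\para_j \to 0$ in norm... — the cleanest fix, which I would use, is: $\ur := \lim$ (weakly) of $A^{1/2}$-type smoothing is unavailable, so instead define $\ur$ via $\ur := u - \un$ where $\un := \text{w-}\lim_j B_{\para_j}u - \text{(correction)}$; concretely, since $B_{\para_j} u \rightharpoonup \ur$ and $B_{\para_j}u = u + (B_{\para_j} - I)u$ with $(B_\para - I)u = -(A+\para I)^{-1}Au$, and since $(A+\para I)^{-1}A$ maps into $\overline{\R(A)}$ with the weakly closed target, the weak limit of $(B_{\para_j}-I)u$ lies in $\overline{\R(A)}$; call it $w$. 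Then $\ur := u + w$ satisfies... no: $\text{w-}\lim B_{\para_j}u = u + w$, so $\ur = u + w$, giving $\ur - u = w \in \overline{\R(A)}$ and also $u - \ur = -w \in \N(A)$ from the $A\ur = Au$ computation. So $w \in \overline{\R(A)} \cap \N(A) = \{0\}$, forcing $\ur = u$ — which would say $\N(A) = 0$, clearly wrong. The resolution is that the $A\ur = Au$ step only uses $A B_{\para_j} u \to Au$; let me recompute: $A B_\para u = A \para (A+\para I)^{-1} u = \para A (A+\para I)^{-1} u = \para[u - \para(A+\para I)^{-1}u] = \para u - \para B_\para u \to 0$. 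So $A\ur = 0$, i.e. $\ur \in \N(A)$, and then $\ur \in \overline{\R(A)} \cap \N(A) = \{0\}$ forces the weak limit to be $0$. That tells us $B_{\para_j}u \rightharpoonup 0$. But $u - B_\para u = (A+\para I)^{-1}Au$, whose weak limit is then $u$; and $(A+\para I)^{-1}Au \in \overline{\R(A)}$ for each $\para$, with $\overline{\R(A)}$ weakly closed, so $u \in \overline{\R(A)}$ — again too strong. The issue is that the decomposition $\overline{\R(A)} \oplus \N(A) = \ix$ is a genuine theorem and the slick argument above must be replaced: I would instead follow the standard mean-ergodic-type argument of Browder/Kato. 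Specifically, set $P_\para := I - B_\para = A(A+\para I)^{-1}$; then $\R(P_\para) \subseteq \overline{\R(A)}$, $\norm{P_\para} \le 1 + M$, and one shows $P_\para$ converges strongly on $\ix$: on $\overline{\R(A)}$ to $I$ and on $\N(A)$ to $0$, and $\ix = \overline{\R(A) + \N(A)}$; the remaining point is that $\R(A) + \N(A)$ is already dense, which follows from reflexivity by a Hahn–Banach/duality argument applied to $A^*$.

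The main obstacle, then, is establishing density of $\R(A) + \N(A)$ in $\ix$ — equivalently, surjectivity of the strong limit projection — and this is exactly where reflexivity is indispensable: the argument is to show that a functional annihilating $\R(A) + \N(A)$ must vanish, using that $A^*$ is also \postype on the (reflexive) dual $\ix^*$, so $\overline{\R(A^*)} \oplus \N(A^*) = \ix^*$ and the annihilator of $\R(A)$ equals $\N(A^*)$. Once the decomposition $\ix = \overline{\R(A)} \oplus \N(A)$ is in hand, the norm bound is immediate: given $\ur \in \overline{\R(A)}$ and $\un \in \N(A)$, apply $B_\para = \para(A+\para I)^{-1}$ to $\ur + \un$; we get $B_\para(\ur + \un) = B_\para \ur + \un$, and letting $\para \to 0$ (using $B_\para \ur \to 0$ by Proposition \ref{th:direct_exact_data}, since $\ur \in \overline{\R(A)}$) yields $B_\para(\ur+\un) \to \un$, so $\norm{\un} \le \liminf_{\para\to 0}\norm{B_\para(\ur+\un)} \le \sup_{\para>0}\norm{B_\para}\,\norm{\ur+\un} \le M\norm{\ur+\un}$, which is the claimed estimate and in particular shows the sum is topologically direct.
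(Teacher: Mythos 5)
The paper itself offers no argument for this lemma (it simply cites \cite[Theorem 1.1.10]{Plato[95]}), so your attempt must stand on its own, and as written it has a genuine gap at the central point: the proof that $\overline{\R(A)} + \N(A)$ is all of $\ix$ is never completed. Your weak-compactness argument was the right one, but you derailed it by asserting that the weak limit $v$ of $B_{\para_n}u = \para_n(A+\para_n I)^{-1}u$ lies in $\overline{\R(A)}$; that is unjustified (for $u \in \N(A)$ one has $B_\para u = u$, which need not be near $\R(A)$ at all), and it is precisely what produced the ``too strong'' conclusions that made you abandon the route. The correct completion uses exactly the pieces you already wrote down, assembled differently: by reflexivity choose $\para_n \to 0$ with $B_{\para_n}u \rightharpoonup v$; then $A B_{\para_n}u = \para_n u - \para_n B_{\para_n}u \to 0$ in norm while $A B_{\para_n}u \rightharpoonup Av$, so $v \in \N(A)$; and $u - B_{\para_n}u = A(A+\para_n I)^{-1}u \in \R(A)$, so the weak limit $u-v$ of these elements lies in the norm-closed, hence weakly closed, subspace $\overline{\R(A)}$. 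Thus $u = (u-v)+v \in \overline{\R(A)} + \N(A)$, and together with your (correct) trivial-intersection argument the decomposition follows; no mean-ergodic projection machinery and no duality are needed.

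The fallback you propose instead, namely density of $\R(A)+\N(A)$ via Hahn--Banach using ``$\overline{\R(A^*)} \oplus \N(A^*) = \ix^*$'', is circular as stated, since that is the lemma itself applied to $A^*$; moreover, from $\phi \in \N(A^*)$ alone (annihilation of $\R(A)$) you cannot conclude $\phi = 0$: you also need that $\phi$ annihilates $\N(A)$ together with the identification $\N(A)^{\perp} = \overline{\R(A^*)}$, which is where reflexivity would enter, after which only the easy trivial-intersection statement for $A^*$ is required. That repair would work, but it is not what you wrote. Two smaller points: your opening claim that $B_\para$ tends to the identity on $\overline{\R(A)}$ is backwards ($B_\para \to 0$ there; it is $I - B_\para = A(A+\para I)^{-1}$ that tends to the identity), although you later use the correct fact; and your closing estimate $\norm{\un} \le M \norm{\ur + \un}$, obtained from $B_\para(\ur+\un) = B_\para \ur + \un \to \un$ and $\norm{B_\para} \le M$, is correct and is the natural argument for that half of the statement.
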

\begin{proof}
See, e.g., \cite[Theorem 1.1.10]{Plato[95]}.
\end{proof}

\bn
%%%In the remainder of this section we present some auxiliary results 
%%%for \postype operators. Most of them are concerned are considered 
We now present results on the behaviour of
%%% parameter dependent function like 
the bias and the resolvent.
%%%From now on we assume that $ A: \ixh \to \ixh $ is a bounded linear 
%%%\accretive operator on a real or complex Hilbert space $ \ixh $.
%
\begin{lemma}
\label{th:bias-monotone}%%
\axuassump.
\begin{myenumerate_indent}
\item
%%%For each $ u \in \ix $, 
The functional
$ \para \mapsto \norm{\fpara} = \norm{\para (A + \para I)^{-1} u } $
%
%%%\begin{align*} 
%%%b(\para) = \norm{\para (A + \para I)^{-1} u }, \quad \para > 0,
%%%\end{align*}
%
is continuous on $ \reza_+ $.
%%%= \{ \para \in \reza \mid \para > 0 \} $.

\item 
We have
$ \lim_{\para\to \infty} \norm{\fpara} = \norm{ u } $.
%%%  \ \textup{ for } \ u \in \ix $.
%
%%%\begin{align} 
%%%& \lim_{\para\to 0} \norm{\fpara} = 0 \ \textup{ for } \ u \in \overline{\R(A)},
%%%\qquad 
%%\lim_{\para\to \infty} \norm{\fpara} = \norm{ u }  \ \textup{ for } \ u \in \ix.
%%%& b(\para) \to 0 \quad (\para \to 0) \ \Longleftrightarrow \ u \in \overline{\R(A)}.
%%%\label{eq:bias-asymptotics}
%%%\end{align} 
%
\item
If \refeq{postype} holds with $ M = 1 $, 
then $ \para \mapsto \norm{\fpara} $  is monotonically increasing on $ \reza_+ $.
\end{myenumerate_indent}
\end{lemma}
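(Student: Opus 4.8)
The plan is to derive all three assertions from the resolvent identity for the family $A + \para I$, $\para > 0$, together with the bound \refeq{postype}. For part (a) I would first record
\begin{align*}
(A+\para I)^{-1} - (A+\para' I)^{-1} = (\para'-\para)(A+\para I)^{-1}(A+\para' I)^{-1}, \qquad \para, \para' > 0,
\end{align*}
and estimate, using \refeq{postype},
\begin{align*}
\norm{(A+\para I)^{-1} - (A+\para' I)^{-1}} \le |\para-\para'|\,\frac{M^2}{\para\para'}.
\end{align*}
Thus $\para \mapsto (A+\para I)^{-1}$ is locally Lipschitz, in particular continuous, in the operator norm on $\reza_+$; since $\fpara = -\para(A+\para I)^{-1}u$ by \refeq{epara-rep}, continuity of $\para\mapsto\fpara$, and hence of $\para\mapsto\norm{\fpara}$, follows.

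For part (b) I would use the identity $\para(A+\para I)^{-1}u = u - (A+\para I)^{-1}Au$, which comes from $\para(A+\para I)^{-1} = I - A(A+\para I)^{-1}$ and the commutativity of $A$ with its resolvent. Together with \refeq{epara-rep} this gives $\fpara + u = (A+\para I)^{-1}Au$, so $\norm{\fpara + u} \le \tfrac{M}{\para}\norm{Au} \to 0$ as $\para\to\infty$, whence $\norm{\fpara}\to\norm{u}$.

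Part (c) is the substantive step. Fix $0 < \para_1 < \para_2$. Rewriting \refeq{epara-rep} at $\para_2$ in the form $u = -\para_2^{-1}(A+\para_2 I)\fpara[\para_2]$, inserting this into $\fpara[\para_1] = -\para_1(A+\para_1 I)^{-1}u$, and using the elementary identity $(A+\para_1 I)^{-1}(A+\para_2 I) = I + (\para_2-\para_1)(A+\para_1 I)^{-1}$, one obtains
\begin{align*}
\fpara[\para_1] = \frac{\para_1}{\para_2}\Bigl[I + (\para_2-\para_1)(A+\para_1 I)^{-1}\Bigr]\fpara[\para_2].
\end{align*}
Taking norms and invoking \refeq{postype} with $M=1$, i.e.\ $\norm{(A+\para_1 I)^{-1}}\le 1/\para_1$, the coefficient collapses exactly to $1$:
\begin{align*}
\norm{\fpara[\para_1]} \le \frac{\para_1}{\para_2}\Bigl(1 + \frac{\para_2-\para_1}{\para_1}\Bigr)\norm{\fpara[\para_2]} = \norm{\fpara[\para_2]},
\end{align*}
which is the claimed monotonicity. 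I expect the main obstacle to be spotting this representation: the crude resolvent estimates only bound $\norm{\fpara}$ by $(1+M)\norm{u}$ with no monotonicity, and the improvement relies entirely on the displayed identity, in which the coefficients $\para_1/\para_2$ and $(\para_2-\para_1)/\para_2$ add to exactly $1$ precisely because $M = 1$. Parts (a) and (b) are routine once the resolvent identity is in hand.
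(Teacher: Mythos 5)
Your proposal is correct and takes essentially the same route as the paper: your key identity $\fpara[\para_1] = \tfrac{\para_1}{\para_2}\bigl[I + (\para_2-\para_1)(A+\para_1 I)^{-1}\bigr]\fpara[\para_2]$, whose norm bound collapses to $1$ exactly when $M=1$, is the paper's convex-combination identity $(I+\sigma_2 A)^{-1}(I+\sigma_1 A) = \omega I + (1-\omega)(I+\sigma_2 A)^{-1}$ written in the variable $\para$ rather than $\sigma = 1/\para$. Parts (a) and (b) are likewise routine resolvent arguments matching the paper's treatment.
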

\begin{proof}
Continuity of the mapping $ \para \mapsto \norm{\fpara} $ is obvious. 
%%%The first statement in \refeq{bias-asymptotics} follows, e.g., from \cite[Example 4.1]{Plato[96]}; this statement is in fact identical with the first statement in 
%%%Proposition \ref{th:direct_exact_data} and is given here only for completeness.
The asymptotical behaviour of the bias considered in part (b) follows from the representation
\begin{align} 
\norm{\fpara} = \norm{(\para^{-1} A + I)^{-1} u } = \norm{(I + \rr A)^{-1} u} =: g(\rr)
\quad \textup{with } \rr := \para^{-1}
\label{eq:bias-monotone}%%
\end{align} 
and by letting $ \sigma \to 0 $ then.

Next we consider monotonicity. For $ 0 \le \rr_1 \le \rr_2 $ we have
$ (I + \rr_2 A)^{-1} (I + \rr_1 A) = \omega I + (1-\omega)(I + \rr_2 A)^{-1} $
with $ 0 \le \omega := \tfrac{\rr_1}{\rr_2} \le 1 $. Therefore
$ \norm{ (I + \rr_2 A)^{-1} (I + \rr_1 A) } \le 1 $ holds, and then
$ \norm{ (I + \rr_2 A)^{-1} u } \le \norm{ (I + \rr_1 A)^{-1} u } $ easily follows.
This means that the functional $ g $ in \refeq{bias-monotone} is monotonically decreasing on $ \reza_+ $, and 
therefore the function $ \norm{\fpara} $ is monotonically increasing with respect to 
$ \para $.
\end{proof}

\bn
As an immediate consequence of Lemma \ref{th:bias-monotone} we obtain the following result.
\begin{corollary}
\axassump.
Then for each $ 0 \not = u \in \ix $, the function
\begin{align*} 
f(\para) = \para^2 \norm{(A + \para I)^{-1} u }, \quad \para > 0,
\end{align*}
is continuous on $ \reza_+ $, and in addition
%
%%\begin{align*} 
$ \lim_{\para\to 0} f(\para) = 0 $ and
$ \lim_{\para\to \infty} f(\para) = \infty $ holds.
%%%\end{align*} 
If \refeq{postype} holds with $ M = 1 $,
then the function $ f $  is strictly increasing on $ \reza_+ $.
\label{th:bias-monotone-corollary}
\end{corollary}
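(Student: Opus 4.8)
The plan is to reduce everything to the already established Lemma~\ref{th:bias-monotone} via the observation that, by the representation \refeq{epara-rep} of the bias, $\norm{\fpara} = \norm{\para(A+\para I)^{-1}u} = \para\,\norm{(A+\para I)^{-1}u}$, and hence
\[
f(\para) = \para\,\norm{\fpara}, \qquad \para > 0 .
\]
Thus $f$ is the product of the identity map $\para\mapsto\para$ and the map $\para\mapsto\norm{\fpara}$, and each of the four assertions will follow from the corresponding property of the bias. For continuity: $\para\mapsto\para$ is continuous and $\para\mapsto\norm{\fpara}$ is continuous on $\reza_+$ by part (a) of Lemma~\ref{th:bias-monotone}, so their product $f$ is too.

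Next I would handle the two limits. At $\para\to 0$: the defining estimate \refeq{postype} gives $\norm{\fpara}=\norm{\para(A+\para I)^{-1}u}\le M\norm{u}$ for all $\para>0$, whence $0\le f(\para)\le M\norm{u}\,\para\to 0$. At $\para\to\infty$: by part (b) of Lemma~\ref{th:bias-monotone} we have $\norm{\fpara}\to\norm{u}>0$ (here $u\neq 0$ enters), so there is a $\para_0>0$ with $\norm{\fpara}\ge\tfrac12\norm{u}$ for $\para\ge\para_0$, giving $f(\para)\ge\tfrac12\norm{u}\,\para\to\infty$.

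For strict monotonicity under the hypothesis $M=1$: part (c) of Lemma~\ref{th:bias-monotone} gives that $\para\mapsto\norm{\fpara}$ is (non-strictly) monotonically increasing on $\reza_+$; moreover $\norm{\fpara}>0$ for every $\para>0$, since $u\neq 0$ and the operator $(A+\para I)^{-1}$ is injective. Hence for $0<\para_1<\para_2$,
\[
f(\para_1)=\para_1\norm{\fpara[\para_1]}<\para_2\norm{\fpara[\para_1]}\le\para_2\norm{\fpara[\para_2]}=f(\para_2),
\]
where the strict inequality uses $\para_1<\para_2$ together with $\norm{\fpara[\para_1]}>0$, and the last inequality uses the monotonicity of the bias.

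I do not expect a genuine obstacle here — the statement is, as claimed, immediate from Lemma~\ref{th:bias-monotone}. The only point that needs a moment's care is upgrading the non-strict monotonicity of $\para\mapsto\norm{\fpara}$ to the strict monotonicity of $f$, which is precisely where the positivity $\norm{\fpara}>0$ (equivalently, injectivity of $A+\para I$ together with $u\neq 0$) is invoked; without it one would obtain only that $f$ is non-decreasing.
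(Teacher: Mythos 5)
Your proof is correct and is exactly the intended derivation: the paper states the corollary as an immediate consequence of Lemma~\ref{th:bias-monotone}, and your argument (writing $f(\para)=\para\,\norm{\fpara}$ and combining parts (a)--(c) of that lemma with the bound $\norm{\fpara}\le M\norm{u}$ and the positivity $\norm{\fpara}>0$ for $u\neq 0$) supplies precisely the routine details the paper leaves to the reader, including the correct upgrade from non-strict monotonicity of the bias to strict monotonicity of $f$.
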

In a Hilbert space setting we finally present a monotonicity result for the resolvent.
\begin{lemma}
\label{th:resolvent-monotone}%%
\ahassump.
For $ u \in \ixh $ fixed, the functional
%
%%\begin{align*} 
$ r(\para) = \norm{ (A + \para I)^{-1} u }, \para > 0 $,
%%%satisfies $ \lim_{\para\to \infty} r(\para) = 0 $.
%%
%%%\end{align*}
%
%
%%%\begin{align} 
%%%\lim_{\para\to 0} r(\para) = \infty \ \Longleftrightarrow \ u \not \in \R(A),
%%\quad
%%%\textup{ and } 
%%%\quad
%%%\lim_{\para\to \infty} r(\para) = 0.
%%%\label{eq:resolvent-monotone}
%%%\end{align} 
%
%%%If \refeq{postype} holds with $ M = 1 $,
%%%then the function $ r $ 
is monotonically decreasing on $ \reza_+ $.
\end{lemma}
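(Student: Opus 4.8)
The plan is to reduce the claim to an application of the monotonicity result for the bias already established in Lemma~\ref{th:bias-monotone}(c), exploiting that an accretive operator on a Hilbert space satisfies \refeq{postype} with $M=1$. First I would introduce the substitution $\sigma=\para^{-1}$ as in the proof of Lemma~\ref{th:bias-monotone}, so that monotone decrease of $r(\para)$ on $\reza_+$ is equivalent to monotone increase of $\sigma\mapsto\norm{(A+\sigma^{-1}I)^{-1}u}$, i.e.\ of $\sigma\mapsto\sigma\norm{(I+\sigma A)^{-1}u}$ after pulling the scalar $\sigma^{-1}$ out of the resolvent. The key algebraic identity is the same one used in Lemma~\ref{th:bias-monotone}: for $0\le\sigma_1\le\sigma_2$ one has
\begin{align*}
(I+\sigma_2 A)^{-1}(I+\sigma_1 A)=\omega I+(1-\omega)(I+\sigma_2 A)^{-1},\qquad \omega=\tfrac{\sigma_1}{\sigma_2}\in[0,1],
\end{align*}
which gives $\norm{(I+\sigma_2 A)^{-1}(I+\sigma_1 A)}\le 1$ because $\norm{(I+\sigma_2 A)^{-1}}\le 1$ (this is exactly \refeq{postype} with $M=1$ applied to $A+\sigma_2^{-1}I$, rescaled).

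From that contraction estimate I would deduce, for any $v\in\ixh$, the inequality $\norm{(I+\sigma_2 A)^{-1}v}\le\norm{(I+\sigma_1 A)^{-1}v}$ by writing $(I+\sigma_1 A)^{-1}v$ in place of $v$; equivalently, $\sigma\mapsto\norm{(I+\sigma A)^{-1}u}$ is monotonically decreasing. The only remaining point is to convert this into monotone decrease of $\para\mapsto\norm{(A+\para I)^{-1}u}$ rather than of the bias $\para\mapsto\para\norm{(A+\para I)^{-1}u}$. Writing $\norm{(A+\para I)^{-1}u}=\sigma\norm{(I+\sigma A)^{-1}u}$ with $\sigma=\para^{-1}$, one sees that as $\para$ increases, $\sigma$ decreases, so $\norm{(I+\sigma A)^{-1}u}$ does not decrease while the prefactor $\sigma$ strictly decreases; hence the product decreases. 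Thus $r$ is monotonically decreasing on $\reza_+$, as claimed.

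The main obstacle, such as it is, is purely bookkeeping: one must be careful that the monotonicity of the \emph{resolvent norm} $\norm{(A+\para I)^{-1}u}$ and that of the \emph{bias} $\norm{\para(A+\para I)^{-1}u}$ go in the same direction (both are monotone in $\para$, the first decreasing and the second increasing), and that the two are reconciled by the strictly monotone scalar factor $\para$. Since $A$ accretive on a Hilbert space is equivalent to \refeq{postype} with $M=1$ (as recalled in the text via Pazy together with Lemma~\ref{th:r-n-decomp}), no further structural input is needed, and the argument is a one-line corollary of the computation already carried out in the proof of Lemma~\ref{th:bias-monotone}.
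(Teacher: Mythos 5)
There is a genuine gap in your final step, and it is exactly the point the paper's proof has to work around. From Lemma~\ref{th:bias-monotone}(c) you correctly get that $\sigma \mapsto g(\sigma) := \norm{(I+\sigma A)^{-1}u}$ is monotonically decreasing (this is just the monotone increase of the bias $\norm{\fpara}=g(1/\para)$). But your conversion ``$g(\sigma)$ does not decrease while the prefactor $\sigma$ strictly decreases, hence the product $\sigma g(\sigma)=\norm{(A+\para I)^{-1}u}$ decreases'' is a non sequitur: a product of a non-decreasing factor and a strictly decreasing factor need not be monotone in either direction. Concretely, if on some interval $g(\sigma)$ behaved like $c\,\sigma^{-2}$ (perfectly compatible with $g$ being decreasing and bounded by $\norm{u}$), then $\sigma g(\sigma)$ would be \emph{decreasing} in $\sigma$, i.e.\ $r(\para)$ would be \emph{increasing} in $\para$ there. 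What Lemma~\ref{th:resolvent-monotone} asserts is precisely that the bias cannot grow faster than linearly in $\para$, which is strictly more than its mere monotonicity; so the lemma cannot be a one-line corollary of Lemma~\ref{th:bias-monotone}(c) as you claim.

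The missing idea in the paper's proof is to apply the same convex-combination/contraction argument not to $A$ but to $A^{-1}$: if $A$ is boundedly invertible, then $A^{-1}$ is again \accretive and one writes $r(\para)=\norm{(I+\para A^{-1})^{-1}A^{-1}u}$, so the argument from the proof of Lemma~\ref{th:bias-monotone} (with $A^{-1}$ in place of $A$ and $A^{-1}u$ in place of $u$) gives directly that $r$ is decreasing. The general (non-invertible) case is then handled by replacing $A$ with $A_\varepsilon=A+\varepsilon I$, which is \accretive and invertible, obtaining monotonicity of $\para\mapsto\norm{(A+(\para+\varepsilon)I)^{-1}u}$ on $\reza_+$, and letting $\varepsilon\to 0$. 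A direct attempt along your lines, estimating $\norm{(A+\para_1 I)(A+\para_2 I)^{-1}}$ for $\para_1\le\para_2$, only yields the bound $\omega+2(1-\omega)\le 2$ with $\omega=\para_1/\para_2$, because $\norm{A(A+\para I)^{-1}}\le 2$ rather than $\le 1$ for general \accretive $A$; this is why the detour through $A^{-1}$ (where the relevant operator really is a convex combination of $I$ and a contraction) is needed.
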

\begin{proof}
%%\begin{myenumerate}
%%%\item
%%%The resolvent estimate \refeq{postype} means
%%%$ r(\para) =  \Landau(\para^{-1})  $
%%for $ \para\to \infty $ which shows the 
%%asymptotical behaviour in the lemma.

%%%\item
%%%We next show monotonicity of the functional $ r $, if $ M = 1 $ is satisfied. 
\begin{myenumerate}
\item
In a first step we assume that the operator $ A $ has a continuous inverse $ A^{-1}: \ixh \to \ixh $. The functional $ r $ then can be written in the form
$ r(\para) = \norm{(I + \para A^{-1})^{-1} A^{-1}u } $ which according to the proof of 
Lemma \ref{th:bias-monotone} is monotonically decreasing,
since the operator $ A^{-1} $ is \accretive.
%%and that
%%Lemma \ref{th:bias-monotone} obviously holds for invertible operators $ A $.

\item
We now proceed with the general case for $ A $ and consider the operator 
$ A_\varepsilon = A + \varepsilon I : \ixh \to \ixh $ which obviously is an \accretive invertible operator. The first part of this proof shows that 
$ \para \mapsto \norm{ (A + (\para + \varepsilon) I)^{-1} u } $ is
decreasing on $ \reza_+ $ which means
that $ \para \mapsto \norm{ (A + \para I)^{-1} u } $ is
decreasing on the interval $ (\varepsilon, \infty) $.
Letting $ \varepsilon \to 0 $ then yields the desired monotonicity result.
%%% stated in the lemma. 
\end{myenumerate}
This completes the proof of the lemma. 
\end{proof}
%
%

%%%\item
%%%We finally show the first statement in \refeq{resolvent-monotone}.
%%%This is, due to monotonicity, equivalent with the statement
%%%$ r(\para) =  \Landau(1) $ as $ \para \to 0 $ if and only if $ u \in \R(A) $.
%%%A reformulation gives $ \norm{\fpara} = \allowbreak \para r(\para) =  \Landau(\para) $ as $ \para \to 0 $
%%%if and only if $ u \in \R(A) $. This is identical with results stated in 
%%%Proposition \ref{th:direct_exact_data}
%%%and Theorem \ref{th:converse_exact_data_1}.
%

\bn
\textbf{Acknowledgment.}
%%%The author is indebted to Bernd Hofmann (Technische Universit\"at Chemnitz) and
The author 
%%%is indebted to 
would like to thank Bernd Hofmann (TU Chemnitz).
%% and
%%% for continued encouragement and advice.
Without his continued encouragement and advice this work would not have been possible.
%%%would like to thank him for continued encouragement and advice.
%%%This work is a result of his continued encouragement and advice.
%%%Without his help this work  he has provided and which 
%%%Komatsu~\cite{Komatsu[66]},

\end{document}